\theoremstyle{plain}
\newtheorem{theorem}{Theorem}[section]
\newtheorem{proposition}[theorem]{Proposition}
\newtheorem{lemma}[theorem]{Lemma}
\theoremstyle{definition}
\newcommand{\appsection}[1]{\let\oldthesection\thesection
\renewcommand{\thesection}{Appendix \oldthesection}
\section{#1}\let\thesection\oldthesection}
\theoremstyle{remark}
\def\D{{\mathbb{D}}}
\def\Z{{\mathbb{Z}}}
\def\Q{{\mathbb{Q}}}
\def\C{{\mathbb{C}}}
\def\P{{\mathbb{P}}}
\def\O{{\mathcal{O}}}
\def\X{{\mathcal{X}}}
\def\F{{\mathcal{F}}}
\begin{document}
\bibliographystyle{amsplain}
\title[Construcci\'on]{KSBA surfaces with elliptic quotient singularities, $\pi_1=1$, $p_g=0$, and $K^2=1,2$}
\author{\textrm{Ari\'e Stern and Giancarlo Urz\'ua}}
%\date{\today}

\email{stern@math.umass.edu}
\email{urzua@mat.puc.cl}

\maketitle

\begin{abstract}
Among log canonical surface singularities, the ones which have a rational homology disk smoothing are the cyclic quotient singularities $\frac{1}{n^2}(1,na-1)$ with gcd$(a,n)=1$, and three distinguished elliptic quotient singularities. We show the existence of smoothable KSBA normal surfaces with $\pi_1=1$, $p_g=0$, and $K^2=1,2$ for each of these three singularities. We also give a list of new (and old) normal surface singularities in smoothable KSBA surfaces for invariants $\pi_1=1$, $p_g=0$, and $K^2=1,2,3,4$.
\end{abstract}

%\tableofcontents

%----------------------------------------------------------------------------------------------------------------------------------------------
\section{Introduction} \label{s-1}

An \textit{elliptic quotient singularity} is a normal two dimensional singularity which has discrepancy $(-1)$, and its canonical covering (index one cover) is a simple elliptic singularity. Although there are four possible groups to quotient a simple elliptic singularity, mainly $\Z/2\Z$, $\Z/3\Z$, $\Z/4\Z$, and $\Z/6\Z$, there are infinitely many such singularities. The list can be found in \cite[\S4]{GI10}. Among them, there are only three which admit a smoothing whose Milnor fiber has second Betti number equals to zero, i.e. a rational homology disk smoothing (c.f. \cite{Wahl81}). This smoothing is a $\Q$-Gorenstein smoothing over a smooth analytic curve germ; see \cite{Wahl13} for a general discussion. In fact these three elliptic quotient singularities and \textit{Wahl singularities} (i.e. cyclic quotient singularities $\frac{1}{n^2}(1,na-1)$ with gcd$(n,a)=1$) form the complete list of log canonical singularities which have a rational homology disk smoothing.

Let $(P \in X)$ be one of the three elliptic quotient singularities above. Then there is minimal resolution $\pi \colon \tilde{X} \to X$ such that the exceptional divisor consists of $4$ smooth rational curves $E_1$, $E_2$, $E_3$, and $F$. The curves $E_i$ are disjoint, each meets the central curve $F$ transversally at one point, and  $$[-E_1^2,-E_2^2,-E_3^2;-F^2] = [3,3,3;4], [4,2,4;3], \text{or} \, [2,3,6;2]$$ which correspond to $\Z/3\Z$, $\Z/4\Z$, and $\Z/6\Z$ respectively. Since these singularities are determined by their exceptional divisors, we refer to them using the symbol $[-E_1^2,-E_2^2,-E_3^2;-F^2]$. The set of these three singularities will be denoted by $\Q Eq$.

Let $X$ be a normal projective surface with either one Wahl singularity or one $\Q Eq$ singularity. Assume $X$ has no local-to-global obstructions to deform. Then the surface $X$ determines a codimension one component of the Koll\'ar--Shepherd-Barron--Alexeev (KSBA) boundary of the moduli space of surfaces of general type with fixed topological invariants $\chi(\O_X)$ and $K_X^2$; c.f. \cite{H11}. In the pioneering work \cite{LP07}, Lee and Park show the existence of such surfaces (see \cite{Urz13}) for invariants $p_g=0$ (and so $\chi=1$), trivial topological fundamental group, and $K^2=1,2$. Using a similar strategy, in this paper we prove the existence of such surfaces for all the $\Q Eq$ singularities and same invariants.

In \S \ref{s0}, we explain the Lee-Park method \cite{LP07} adapted to our situation. In \S \ref{s1} and \S \ref{s2}, we show the existence of the above mentioned KSBA surfaces with $\Q Eq$ singularities for $K^2=1$ and $K^2=2$ respectively. In \S \ref{s3}, we list the known Wahl singularities appearing for invariants $p_g=0$, $\pi_1=1$, and $K^2=1,2,3,4$ (the allowed $K^2$ for surfaces with no-local-to-global obstructions). In that list we show many new singularities. In particular we achieve the highest known indices for $K^2=2$ ($n=58$), and for $K^2=3$ ($n=123$). Also, we identify for each singularity the minimal smooth model of the general surface of the corresponding KSBA divisor, using the explicit birational geometry in \cite{HTU13} (see also \cite{Urz13}). The majority of them are rational surfaces, for explicitness one can use  \cite{Urz13b}.

\subsection*{Notation}

We use Kodaira's notation for singular fibers of elliptic fibrations. A canonical divisor of a normal surface $S$ is denoted by $K_S$. The strict transform of a curve under a birational map is denoted by the same letter. A SNC divisor in a smooth surface is a nodal divisor formed by nonsingular curves. We denote de dual of $\F$ by $\F^{\vee}$. We write $\pi_1(A)$ for the topological fundamental group of $A$ .

\subsection*{Acknowledgements}

We thank H. Park and D. Shin for providing us the list \cite{PSlist}. This is part of the master's thesis \cite{S13} of Ari\'e Stern at the Pontificia Universidad Cat\'olica de Chile. Both authors were supported by a FONDECYT Inicio grant funded by the Chilean Government (11110047).

%----------------------------------------------------------------------------------------------------------------------------------------------
\section{The Lee-Park method} \label{s0}

In this section we explain the Lee-Park method in \cite{LP07} slightly modified for $\Q Eq$ singularities. The steps below will be explicitly shown in each of the examples of the upcoming sections.

\textbf{Pencil of cubics.} We start with a suitable pencil of cubics in $\P^2$ which produces an elliptic fibration with sections. By blowing ups this fibration, we construct the exceptional divisors of Wahl and $\Q Eq$ singularities. Singular fibers and sections of this elliptic fibration, and maybe some other special curves, give us curves to begin the construction of the exceptional divisors. Many such exceptional configurations can be constructed from a given elliptic fibration. The point is that we need a ``equilibrium" on some data to construct the KSBA surface $X$ we want. This surface $X$ will be the contraction of the Wahl and/or $\Q Eq$ configurations. Notice that the configurations are exceptional divisors of rational singularities, and so by Artin's criterion \cite[III\S3]{BHPV04} they can be contracted to a normal projective surface $X$.
\bigskip

\textbf{No local-to-global obstructions.}
Given a normal projective surface $X$, we say that it has \textit{no-local-to-global obstructions} to deform if any deformation of each of the singularities of $X$ can be glued together to a global deformation of $X$. The obstruction relies on $H^2(X,{\Omega_X^1}^{\vee})$.

\begin{lemma}
Let $Z$ be a nonsingular projective surface. Let $W_1,\ldots,W_r$ be disjoint exceptional divisors in $Z$ of either Wahl or $\Q Eq$ singularities. Assume that $H^2\big(Z,{\Omega_{Z}^1(\text{log}\,(\sum_{i=1}^r W_i))}^{\vee} \big)=0$. Let $f \colon Z \to X$ be the contraction of $W_1,\ldots,W_r$. Then $X$ has no-local-to-global obstructions to deform.  \label{obstr=0}
\end{lemma}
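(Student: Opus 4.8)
The plan is to reduce everything to the vanishing of $H^2(X,{\Omega_X^1}^\vee)$, which by the remark preceding the lemma is exactly the space carrying the local-to-global obstruction, and then to compute this group upstairs on $Z$. So the whole content is to show that the hypothesis $H^2\big(Z,{\Omega_Z^1(\log D)}^\vee\big)=0$, with $D=\sum_{i=1}^r W_i$, forces $H^2(X,{\Omega_X^1}^\vee)=0$.

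To transfer cohomology across the contraction $f\colon Z\to X$, I would use the sheaf of logarithmic vector fields $\Theta:={\Omega_Z^1(\log D)}^\vee$ (the vector fields tangent to $D$) together with two facts about $f$: that $f_*\Theta={\Omega_X^1}^\vee$ and that $R^q f_*\Theta=0$ for all $q\ge 1$. The first identification holds because both sheaves are reflexive and agree over $X\setminus\operatorname{Sing}(X)$, a vector field near a singular point of $X$ lifting precisely to a vector field on $Z$ tangent to the exceptional locus. For the second, note that $R^q f_*\Theta=0$ automatically when $q\ge 2$ since the fibers of $f$ are at most one-dimensional, so the real point is $R^1 f_*\Theta=0$; as the $W_i$ are disjoint, this is a question local to each singular point and reduces to the individual singularity types. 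Here one uses that both Wahl and $\Q Eq$ singularities are rational (as already observed, their configurations are exceptional divisors of rational singularities), so the standard analysis of logarithmic vector fields on resolutions of rational surface singularities, in the spirit of Burns--Wahl and Wahl, applies.

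Granting these two facts, the Leray spectral sequence $H^p(X,R^q f_*\Theta)\Rightarrow H^{p+q}(Z,\Theta)$ has only the row $q=0$ surviving and therefore degenerates, giving isomorphisms $H^n(Z,\Theta)\cong H^n(X,f_*\Theta)=H^n(X,{\Omega_X^1}^\vee)$ for every $n$. Taking $n=2$ converts the hypothesis $H^2\big(Z,{\Omega_Z^1(\log D)}^\vee\big)=0$ into $H^2(X,{\Omega_X^1}^\vee)=0$, which is the desired absence of local-to-global obstructions.

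The hard part will be the local vanishing $R^1 f_*\Theta=0$, especially at the $\Q Eq$ singularities: unlike the Wahl (cyclic quotient) singularities these are strictly log canonical rather than klt, so I would verify it directly by computing the logarithmic vector fields along each of the three star-shaped configurations $[3,3,3;4]$, $[4,2,4;3]$, $[2,3,6;2]$ --- for instance via formal functions, controlling $H^1$ of the restrictions of $\Theta$ to the exceptional curves and their infinitesimal neighborhoods. The identification $f_*\Theta={\Omega_X^1}^\vee$ should then come for free from the same local analysis.
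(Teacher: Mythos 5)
Your global architecture coincides with the paper's, which is in turn Lee--Park's: identify $f_*\big({\Omega_Z^1(\log D)}^{\vee}\big)$ with ${\Omega_X^1}^{\vee}$, kill the higher direct images, and run Leray to convert the hypothesis into $H^2\big(X,{\Omega_X^1}^{\vee}\big)=0$; the paper compresses all of this into citations to \cite[Lemma 1, Theorem 2]{LP07}. So the only point of substance is the step you yourself flag as the hard part, $R^1f_*\big({\Omega_Z^1(\log D)}^{\vee}\big)=0$, and there your justification has a genuine soft spot: rationality is not the operative property. The stalk of $R^1f_*$ of the log tangent sheaf at a singular point is the tangent space to equisingular deformations of the germ (Burns--Wahl, Wahl), and there are rational surface singularities with nontrivial equisingular deformations --- for instance star-shaped rational singularities whose central curve meets four or more branches, where the cross-ratio of the intersection points is a modulus. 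Hence ``rational, so the standard analysis applies'' does not close the Wahl case as stated, and rationality alone will not close the $\Q Eq$ case either.

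The missing word is \emph{tautness}: a singularity is taut when its analytic type is determined by the resolution graph, and tautness is what forces the local $H^1$ of the log tangent sheaf to vanish. This is exactly how the paper argues --- the singularities are rational \emph{and taut}, so the argument of \cite[Lemma 1]{LP07} applies (see also \cite[\S 8]{Wahl11}). Cyclic quotient singularities are taut by Brieskorn, and the three $\Q Eq$ graphs are taut by Laufer's classification: they are star-shaped with rational central curve and exactly three branches, so no cross-ratio moduli occur. Your proposed case-by-case computation of log vector fields along $[3,3,3;4]$, $[4,2,4;3]$, $[2,3,6;2]$ via formal functions would in effect reprove tautness for these graphs, so your route is viable but left unexecuted precisely at the crucial step; and your concern that these singularities are strictly log canonical rather than klt is beside the point, since tautness, not the discrepancy, is what drives the vanishing.
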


\begin{proof}
The key is to prove $R^1f_*({\Omega_{Z}^1(\text{log}\,(\sum_{i=1}^r W_i))}^{\vee})=0$. For this, we only need that the singularities are rational and taut, to then apply the argument in \cite[Lemma 1]{LP07}. Note that $\Q Eq$ are rational and taut. See also \cite[\S 8]{Wahl11}. After that, it follows through the same argument as in \cite[Theorem 2]{LP07}.
\end{proof}

With the following lemmas one can prove the vanishing of the above cohomology in many cases. Their proofs can be found in \cite{LP07} and \cite{PSU13}.

\begin{lemma}
Let $g \colon Y \to \P^1$ be an elliptic fibration with at least one section. Assume $Y$ has two singular fibers $F_1$ and $F_2$ of type $I_n$ and $I_m$ with $n,m\ge 1$. Let $\pi \colon Y' \to Y$ be the blow-up of $Y$ in a node of $F_1$ and in a node of $F_2$. Then $H^2 \big(Y', {\Omega_{Y'}^1(\text{log}\,(F_1+F_2))}^{\vee}  \big)=0$. This is also true if we consider only one singular fiber. \label{initialzero}
\end{lemma}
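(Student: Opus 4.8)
The plan is to convert the stated $H^2$-vanishing into the absence of certain global sections, and then to kill those sections using the elliptic fibration. After the two blow-ups the divisor $D := F_1 + F_2$ (the strict transforms) is SNC in $Y'$: for an $I_n$ fiber with $n \ge 2$ it is a chain of smooth rational curves, while for an $I_1$ fiber the node has been resolved and $D$ is a single smooth rational curve. Hence $\Omega^1_{Y'}(\log D)$ is locally free, $(\Omega^1_{Y'}(\log D))^{\vee} = T_{Y'}(-\log D)$, and Serre duality gives
\[
H^2\big(Y',\,(\Omega^1_{Y'}(\log D))^{\vee}\big) \;\cong\; H^0\big(Y',\,\Omega^1_{Y'}(\log D)\otimes\omega_{Y'}\big)^{\vee}.
\]
So everything reduces to showing $H^0(Y',\,\Omega^1_{Y'}(\log D)\otimes\omega_{Y'}) = 0$.

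I would attack this through the residue sequence, twisted by $\omega_{Y'}$,
\[
0 \to \Omega^1_{Y'}\otimes\omega_{Y'} \to \Omega^1_{Y'}(\log D)\otimes\omega_{Y'} \xrightarrow{\ \mathrm{res}\ } \bigoplus_{j}\omega_{Y'}|_{D_j} \to 0,
\]
the sum running over the components $D_j$ of $D$. The interior term is disposed of by another application of Serre duality: $H^0(\Omega^1_{Y'}\otimes\omega_{Y'}) \cong H^2(T_{Y'})^{\vee}$, and this is $0$ because $Y'$ is rational --- in the paper's setting $Y$ is the rational elliptic surface produced by the pencil of cubics, so $q(Y')=p_g(Y')=0$ and $H^2(T_{Y'})=0$. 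Thus a global section $s$ of the middle sheaf is recovered from its residues $\mathrm{res}(s)$, and the whole statement comes down to proving that these residues vanish. One cannot do this component by component: a $(-2)$-curve $D_j$ inside an $I_n$ fiber has $K_{Y'}\cdot D_j = 0$, so $\omega_{Y'}|_{D_j}=\O_{D_j}$ already has a one-dimensional space of sections, and the residues must instead be controlled \emph{globally}.

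For this last, decisive step I would use that $D$ is supported on fibers of $g' := g\circ\pi$, with $t_1,t_2 \in \P^1$ the images of $F_1,F_2$. The residues of a logarithmic form along the components of a single fiber are not independent; they are cut out by the base directions, i.e. by the subsheaf generated by $(g')^{*}\Omega^1_{\P^1}(\log(t_1+t_2))$, and after twisting by $\omega_{Y'}$ the relevant sections are governed by $g'_{*}\omega_{Y'}$ on $\P^1$. Since $Y'$ is rational one has $K_{Y'} = -\pi^{*}F + E_1 + E_2$ with $F$ a fiber, so $\omega_{Y'}$ is negative in the fiber direction and the pushforward to $\P^1$ has negative degree; it therefore has no sections, which forces $\mathrm{res}(s)=0$ and hence $s=0$. \textbf{The main obstacle is exactly this residue bookkeeping, and it is where the blow-ups earn their keep.} Because $D$ consists of the strict transforms only, it omits the exceptional curves $E_1,E_2$, even though $E_i$ is a component of the fiber $(g')^{-1}(t_i)$; a naive base form $(g')^{*}\big(dt/(t-t_i)\big)$ acquires a spurious pole along $E_i$ and is not a section of $\Omega^1_{Y'}(\log D)$, so the comparison between $Y'$ and the relatively minimal fibration must be corrected by the $E_i$, and this correction is precisely what drives the degree on $\P^1$ strictly below zero. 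Getting this right --- in particular in the $I_1$ case, where the strict transform meets $E_i$ with multiplicity two and one must verify both the SNC hypothesis and the orders of the differentials along $E_i$ --- is the delicate part. The one-fiber statement follows from the same argument with $t_2$ (and $F_2$) deleted. This realizes the strategy of \cite[Lemma~1 and Theorem~2]{LP07} for the present elliptic fibrations.
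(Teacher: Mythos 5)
Your reduction is the right skeleton, and it is recognizably the same strategy as the proof the paper relies on (the paper does not prove this lemma itself; it defers to \cite{LP07} and \cite{PSU13}): the strict transform $D=F_1+F_2$ is SNC after the blow-ups, Serre duality converts the statement into $H^0\big(Y',\Omega^1_{Y'}(\log D)\otimes\omega_{Y'}\big)=0$, the twisted residue sequence applies, and $H^0(\Omega^1_{Y'}\otimes\omega_{Y'})\cong H^2(T_{Y'})^{\vee}=0$ once $Y'$ is rational. Two remarks on the setup. First, you quietly inserted rationality, and some such hypothesis is genuinely necessary, not cosmetic: for an elliptic K3 blown up at nodes of two $I_1$ fibers, the pullback of the nowhere vanishing section of $\omega_{\P^1}(t_1+t_2)$ is a nonzero element of $H^0\big(\Omega^1_{Y'}(\log(D+E_1+E_2))\big)\subset H^0\big(\Omega^1_{Y'}(\log D)\otimes\O(E_1+E_2)\big)=H^0\big(\Omega^1_{Y'}(\log D)\otimes\omega_{Y'}\big)$, so the lemma as literally stated fails there; the intended setting is the rational elliptic fibrations of \cite{LP07,PSU13}. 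Second, your observation that $D$ omits the exceptional curves, so that base log forms acquire spurious poles along $E_1,E_2$, is exactly the right point to isolate.

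The genuine gap is that the decisive step, $\mathrm{res}(s)=0$, is asserted rather than proved, and the mechanism you offer is wrong as stated. The residues live in $\bigoplus_j H^0(D_j,\omega_{Y'}|_{D_j})$, and these spaces are far from zero: $\deg\omega_{Y'}|_{D_j}=-2-D_j^2$ equals $0$ on the untouched $(-2)$-components, $1$ on the two $(-3)$-components through the blown-up node, and $2$ on the $(-4)$-curve in the $I_1$ case, giving $h^0=1,2,3$ respectively. Saying that ``$\omega_{Y'}$ is negative in the fiber direction'' is false ($K_{Y'}\cdot F_t=0$ for a fiber $F_t$); what is true is $g'_*\omega_{Y'}\cong\O_{\P^1}(-1)$, but you construct no map carrying the residue data to sections of that sheaf, and none exists for free: the obstruction to lifting a residue vector lies in $H^1(\Omega^1_{Y'}\otimes\omega_{Y'})$ and cannot be evaluated by a degree count on the base. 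A workable completion of your plan is, for instance: embed $\Omega^1_{Y'}(\log D)\otimes\omega_{Y'}$ into $\Omega^1_{Y'}(\log \bar{D})\otimes\O(-\pi^*F+E_1+E_2)$, where $\bar{D}=D+E_1+E_2$ is the reduced union of the fibers over $t_1,t_2$, and use the exact sequence $0\to g'^*\Omega^1_{\P^1}(\log(t_1+t_2))\to\Omega^1_{Y'}(\log\bar{D})\to Q\to 0$, in which $Q$ is rank-one torsion-free with $Q^{\vee\vee}\cong\O(\pi^*F)$ and fails to be invertible exactly at the singular points of the remaining fibers. The subsheaf twisted gives $H^0(\O(-\pi^*F+E_1+E_2))=0$ since that class is not effective, while $H^0$ of the twisted quotient injects into sections of $\O(E_1+E_2)$ constrained at those singular points, whose existence (no rational elliptic fibration has $F_1,F_2$ as its only singular fibers, by the semistable bound) kills the unique candidate $E_1+E_2$. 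This bookkeeping --- which you yourself flag as ``the delicate part'' --- is where the lemma actually lives, and since it is not carried out, the proposal does not establish the vanishing at its central step.
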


\begin{lemma}
Let $Y$ be a nonsingular projective surface, and let $D$ be a $SNC$ divisor. Let $\pi \colon Y' \to Y$ be the blow-up of $Y$ at some point, and let $E$ be the corresponding $(-1)$-curve. Then, $H^2 \big(Y,{\Omega_Y^1(\text{log}\,D)}^{\vee} \big)=H^2 \big(Y',{\Omega_{Y'}^1(\text{log}\,(D+E))}^{\vee} \big)$.

Furthermore, if $G$ is a $(-1)$-curve on $Y$ such that $D+G$ is a $SNC$ divisor, then $H^2 \big(Y,{\Omega_Y^1(\text{log}\,D)}^{\vee} \big)=H^2 \big(Y,{\Omega_{Y}^1(\text{log}\,(D+G))}^{\vee} \big).$ \label{adddelete}
\end{lemma}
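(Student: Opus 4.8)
Write $\Theta_Y(D):={\Omega_Y^1(\log D)}^{\vee}$ for the logarithmic tangent sheaf, i.e.\ the sheaf of vector fields tangent to $D$; since $D$ is SNC it is locally free of rank $2$. For the first assertion set $\mathcal F:=\Theta_{Y'}(\tilde D+E)$ on $Y'$ (here $\tilde D$ is the strict transform, so $\tilde D+E$ is again SNC). The plan is to compare $H^2(Y',\mathcal F)$ with $H^2(Y,\Theta_Y(D))$ through the Leray spectral sequence of $\pi$. Because the fibres of $\pi$ have dimension $\le 1$ we have $R^q\pi_*\mathcal F=0$ for $q\ge 2$, so once I establish $R^1\pi_*\mathcal F=0$ the spectral sequence degenerates in top degree and gives $H^2(Y',\mathcal F)=H^2(Y,\pi_*\mathcal F)$. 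It then remains to compare $\pi_*\mathcal F$ with $\Theta_Y(D)$: pushing a log vector field forward over $Y\smallsetminus\{p\}$ and extending across $p$ by Hartogs (both sheaves are reflexive) produces a natural injection $\pi_*\mathcal F\hookrightarrow\Theta_Y(D)$ that is an isomorphism away from $p$. Its cokernel $C$ is then coherent and supported at the single point $p$, hence of finite length, so $H^1(C)=H^2(C)=0$; the long exact sequence of $0\to\pi_*\mathcal F\to\Theta_Y(D)\to C\to 0$ yields $H^2(Y,\pi_*\mathcal F)\cong H^2(Y,\Theta_Y(D))$, and combined with the Leray step this proves the first assertion.

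The substantive point is therefore $R^1\pi_*\mathcal F=0$. By the theorem on formal functions $R^1\pi_*\mathcal F$ is concentrated at $p$ and vanishes as soon as $H^1\big(E,\mathcal F|_E\otimes\O(-jE)|_E\big)=0$ for every $j\ge 0$. Since $E$ is a $(-1)$-curve, $\O(-jE)|_E=\O_{\P^1}(j)$, so it suffices to check that every summand of $\mathcal F|_E$ has degree $\ge -1$. A local frame computation in the two blow-up charts, distinguishing whether $p$ lies off $D$, on a single smooth branch, or at a node (so that $\tilde D\cdot E=0,1,2$), pins down the splitting type; adjunction gives $\deg(\det\mathcal F|_E)=-K_{Y'}\cdot E-E^2-\tilde D\cdot E=2-\tilde D\cdot E$, namely $2,1,0$ respectively, and the frame exhibits the two summands. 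I expect the node case $\tilde D\cdot E=2$ to be the delicate one: there $\deg\det=0$, and one must rule out an unbalanced splitting such as $\O(2)\oplus\O(-2)$, for which $H^1$ would fail to vanish at $j=0$. This is exactly where the SNC/tautness local structure is genuinely used, and it is the computation carried out in \cite{LP07, PSU13}.

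For the second assertion no blow-up is needed; it follows from a short exact sequence on $Y$ itself. Adjoining the single smooth rational component $G$ to the log divisor gives the residue sequence
\[
0\to\Theta_Y(D+G)\to\Theta_Y(D)\to N_{G/Y}\to 0,
\]
whose cokernel I claim is exactly $N_{G/Y}$: in local coordinates with $G=\{x=0\}$ and $D=\{y=0\}$ the quotient $\langle\partial_x,y\partial_y\rangle/\langle x\partial_x,y\partial_y\rangle$ is freely generated by the class of $\partial_x$ at every point of $G$, whether or not it meets $D$, so the points of $D\cap G$ contribute no twist. Since $G$ is a $(-1)$-curve, $N_{G/Y}=\O_{\P^1}(-1)$, and both $H^1(\P^1,\O(-1))$ and the top cohomology $H^2$ of the curve $G$ vanish. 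The long exact sequence then forces $H^2(Y,\Theta_Y(D+G))\cong H^2(Y,\Theta_Y(D))$, which is the claim. The SNC hypothesis enters precisely in guaranteeing the clean local model, and hence the identification of the cokernel with $N_{G/Y}$; this part is elementary and presents no real obstacle. Thus the whole difficulty of the lemma is the $R^1\pi_*$-vanishing in the first assertion, and within it the node case.
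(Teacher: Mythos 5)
Your overall route is the standard one: the paper itself gives no argument for this lemma (it defers to \cite{LP07} and \cite{PSU13}), and those proofs proceed exactly as you do --- Leray plus $R^1\pi_*$-vanishing for the blow-up statement, and the residue sequence $0\to\Theta_Y(D+G)\to\Theta_Y(D)\to N_{G/Y}\to 0$ with $N_{G/Y}\cong\mathcal{O}_{\mathbb{P}^1}(-1)$ for the $(-1)$-curve statement. Your second half is complete and correct, including the local check that the cokernel is the \emph{untwisted} normal bundle at points of $D\cap G$; and your bookkeeping $\pi_*\mathcal{F}\hookrightarrow\Theta_Y(D)$ with finite-length cokernel is exactly what is needed here, since this paper's statement allows blowing up a point \emph{off} $D$, where $\pi_*\mathcal{F}$ is genuinely smaller than $\Theta_Y(D)$ (the cokernel is a skyscraper of length $2$ there, length $\le 1$ at points of $D$).

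The one genuine hole is that you stop at the crux: having correctly reduced $R^1\pi_*\mathcal{F}=0$ to showing every summand of $\mathcal{F}|_E$ has degree $\ge -1$, and having computed $\deg\det\mathcal{F}|_E=2-\tilde{D}\cdot E$, you only ``expect'' the node case to work and defer the splitting to the literature. That step does not fail, and your feared unbalanced splitting $\mathcal{O}(2)\oplus\mathcal{O}(-2)$ is impossible for a reason you can state in one line. Restricting to $E$ gives the exact sequence
\begin{equation*}
0\longrightarrow \mathcal{O}_E \longrightarrow \mathcal{F}|_E \longrightarrow T_E\bigl(-(\tilde{D})|_E\bigr)\longrightarrow 0,
\end{equation*}
where the trivial subsheaf is generated by the Euler field ($u\partial_u$ in a chart with $E=\{u=0\}$, which is a log field for any SNC divisor containing $E$, trivialized by the residue), and the quotient is $T_E\cong\mathcal{O}_{\mathbb{P}^1}(2)$ twisted down by the $k=\tilde{D}\cdot E\in\{0,1,2\}$ transverse intersection points. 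So $\mathcal{F}|_E$ is an extension of $\mathcal{O}_{\mathbb{P}^1}(2-k)$ by $\mathcal{O}_{\mathbb{P}^1}$; its splitting type is therefore $\mathcal{O}\oplus\mathcal{O}(2-k)$ or (only possible when $k=0$, where $\operatorname{Ext}^1(\mathcal{O}(2),\mathcal{O})\neq 0$) $\mathcal{O}(1)\oplus\mathcal{O}(1)$ --- in particular for $k=2$ the extension splits as $\mathcal{O}\oplus\mathcal{O}$ since $\operatorname{Ext}^1(\mathcal{O},\mathcal{O})=0$. In every case both summands have degree $\ge 0$, so $H^1\bigl(E,\mathcal{F}|_E\otimes\mathcal{O}(-jE)|_E\bigr)=0$ for all $j\ge 0$ and $R^1\pi_*\mathcal{F}=0$ by formal functions. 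Note also that tautness plays no role in this lemma --- it enters only in Lemma \ref{obstr=0}, for the vanishing of $R^1f_*$ under the contraction; here the SNC hypothesis alone supplies the local model and the bound $k\le 2$. With this paragraph inserted, your proof is complete and agrees in substance with the cited ones.
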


\begin{lemma}
Let $Y$ be a nonsingular projective surface, and let $D$ be a $SNC$ divisor. Assume that there exists configurations of curves $\{D_1,\ldots ,D_r\}$ which correspond to disjoint exceptional divisors of $ADE$ singularities. Assume that for $1\le i\le r$ we have $D \cap D_i=\emptyset$. Then $H^2 \big(Y,{\Omega_Y^1(\text{log}\,D)}^{\vee} \big)=H^2 \big(Y,{\Omega_{Y}^1(\text{log}\,(D+\sum_{i=1}^r D_i))}^{\vee} \big)$. \label{ADE}
\end{lemma}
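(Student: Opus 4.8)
The plan is to compare the two logarithmic tangent sheaves through the natural inclusion between them and to localize the entire computation in a neighborhood of the added configurations, where the special geometry of rational double points does the work. Throughout write $T_Y(-\log B) := {\Omega_Y^1(\log B)}^{\vee}$ for a $SNC$ divisor $B$, and set $\Delta = \sum_{i=1}^r D_i$. Since the $D_i$ are pairwise disjoint and each is disjoint from $D$, I may add them one at a time (at each step the new configuration is disjoint from the current boundary), so by induction it suffices to treat $r=1$ and write $\Delta = D_1$. A vector field tangent to $D+\Delta$ is in particular tangent to $D$, so there is a short exact sequence
$$0 \to T_Y(-\log(D+\Delta)) \to T_Y(-\log D) \to Q \to 0,$$
where $Q$ is supported on $\Delta$. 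Because $\Delta \cap D = \emptyset$, on a neighborhood of $\Delta$ one has $T_Y(-\log D)=T_Y$ and $T_Y(-\log(D+\Delta))=T_Y(-\log\Delta)$, so $Q$ is canonically the cokernel of $T_Y(-\log\Delta)\hookrightarrow T_Y$; in particular $Q$ does not involve $D$ at all.

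Next I feed this into the long exact cohomology sequence
$$\cdots \to H^1(Q) \xrightarrow{\delta} H^2\big(T_Y(-\log(D+\Delta))\big) \xrightarrow{\beta} H^2\big(T_Y(-\log D)\big) \to H^2(Q) \to \cdots.$$
Since $Q$ is supported on the curve $\Delta$, Grothendieck vanishing gives $H^2(Y,Q)=0$, so $\beta$ is automatically surjective. Thus the asserted equality is equivalent to the injectivity of $\beta$, i.e. to the vanishing of the connecting map $\delta$, and for that it suffices to prove $H^1(Y,Q)=0$.

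It remains to establish $H^1(Y,Q)=0$, and here I use that $\Delta$ is an $ADE$ (rational double point) configuration. Let $g\colon Y \to X'$ be the contraction of $\Delta$ to its rational double points. Each $R^j g_* Q$ is a skyscraper supported at the images of the components of $\Delta$, so the Leray spectral sequence degenerates and $H^1(Y,Q)=H^0(X',R^1 g_* Q)$. Pushing forward $0 \to T_Y(-\log\Delta) \to T_Y \to Q \to 0$ and using $R^2 g_* T_Y(-\log\Delta)=0$ (the fibers of $g$ are one-dimensional), I see that $R^1 g_* Q$ is a quotient of $R^1 g_* T_Y$. Hence the whole lemma reduces to the local vanishing $R^1 g_* T_Y = 0$ for a resolution of rational double points.

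This last vanishing is the classical rigidity/tautness property of $ADE$ singularities (of Burns--Wahl type; cf. the argument behind \cite[Lemma 1]{LP07} and \cite[\S 8]{Wahl11}), and I expect it to be the main obstacle: every preceding step is formal homological algebra, whereas this vanishing genuinely uses that the singularities are rational double points with no equisingular obstructions. Granting $R^1 g_* T_Y = 0$, one gets $R^1 g_* Q = 0$, hence $H^1(Y,Q)=0$, hence $\delta=0$ and $\beta$ is an isomorphism, which is the desired equality of the two $H^2$ groups.
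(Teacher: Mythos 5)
You have a genuine gap, and it sits exactly where you predicted the "main obstacle" would be — but the statement you reduce to is false, not merely hard. Your sheaf $Q$ can be computed explicitly: dualizing the residue sequence for $\Delta=\sum_j C_j$ gives $Q\cong\bigoplus_j N_{C_j}=\bigoplus_j \O_{C_j}(C_j)\cong\bigoplus_j\O_{\P^1}(-2)$, since every component of an $ADE$ configuration is a $(-2)$-curve. Hence $H^1(Y,Q)\cong\C^{\,r'}\neq 0$, where $r'$ is the number of components, so the vanishing you aim for never holds. The same is true of your final reduction target: $R^1g_*T_Y\neq 0$ for the minimal resolution of a rational double point (already for $A_1$: the transition maps $H^1(T_Y\otimes\O_{(n+1)E})\to H^1(T_Y\otimes\O_{nE})$ are surjective and $H^1(T_Y\otimes\O_E)$ surjects onto $H^1(N_E)=H^1(\O_{\P^1}(-2))=\C$, so the formal-functions limit is nonzero). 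This nonvanishing is precisely the content of Burns--Wahl's "local contributions to global deformations": $(-2)$-curves always contribute to $R^1\pi_*T$. What tautness/rationality gives — and what is actually used in Lemma \ref{obstr=0}, following \cite[Lemma 1]{LP07} and \cite[\S 8]{Wahl11} — is the vanishing of $R^1 f_*$ of the \emph{logarithmic} tangent sheaf $T(-\log E)$, i.e.\ the kernel side of your sequence, not the quotient side; so the citation you invoke proves the opposite of what you need.

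Everything before that point is fine: the exact sequence $0\to T_Y(-\log(D+\Delta))\to T_Y(-\log D)\to Q\to 0$, the localization near $\Delta$, Grothendieck vanishing giving surjectivity of $\beta$, and the reduction of the lemma to $\delta=0$. But since $H^1(Q)\neq 0$, the correct (and unavoidable) formulation is that $\delta=0$ if and only if $H^1\big(T_Y(-\log D)\big)\to H^1(Q)$ is surjective, and this must be proved, not sidestepped. By Serre duality it is equivalent to showing that every section of $\Omega^1_Y(\log(D+\Delta))\otimes K_Y$ has vanishing residues along the $C_j$; here the $ADE$ hypothesis enters twice: $K_Y\cdot C_j=0$ forces each residue to be a constant $c_j\in H^0(K_Y|_{C_j})\cong\C$, and restricting to each curve yields the relations $\sum_k (C_j\cdot C_k)\,c_k=0$, so negative definiteness of the intersection matrix forces all $c_j=0$. (This is the Burns--Wahl mechanism used in the right direction: $(-2)$-configurations can always be moved by global first-order deformations.) The paper itself does not spell this out — it cites \cite{LP07} and \cite{PSU13} for the proof — but any complete argument must pass through this surjectivity, which your proposal replaces with a vanishing that fails for every nonempty $ADE$ configuration.
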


Let $X$ be a normal projective surface with no-local-to-global obstructions to deform, and only Wahl and $\Q Eq$ singularities. Then, there are \textit{partial $\Q$-Gorenstein smoothings} $X \subset \X \to 0 \in \D$ of $X$ over an analytic smooth curve germ $\D$ for any subset of singularities of $X$. This is, the deformation $\X \to \D$ is locally on the singularities of $X$ either trivial (preserving the singularity) or a rational homology disk smoothing.
\bigskip

\textbf{Numerical invariants.} Assume $X$ to be a rational surface, as it will be in our examples. Let $X \subset \X \to 0 \in \D$ be any smoothing of $X$. Then by \cite[\S 3]{GS83}, the first Betti number of $X_t$ is constant for all $t$, where $X_t$ is the fiber at $t$. This implies that the irregularity $q(X_t)=\text{dim}_{\C} H^1(\O_{X_t})=0$. Since $\chi(X_t)$ is independent of $t$, we also have $p_g(X_t)=\text{dim}_{\C} H^2(\O_{X_t})=0$.

When $X \subset \X \to 0 \in \D$ is a partial $\Q$-Gorenstein smoothing, then $K_{X_t}^2=K_X^2$ for any $t$. We use the following known fact to compute it.

\begin{proposition}
Let $X$ be a normal projective surface with only Wahl and $\Q Eq$ singularities $Q_1,\ldots, Q_n$. Let $Z\to X$ be the minimal resolution, and let $l_i$ be the number of exceptional curves over $Q_i$. Then, $K_X^2=K_Z^2 + \sum_{i=1}^n l_i.$ \label{K2}
\end{proposition}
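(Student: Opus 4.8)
The plan is to compute $K_X^2$ through the minimal resolution $f \colon Z \to X$ by means of the numerical pullback of $K_X$, and then to show that the entire discrepancy correction localizes at the individual singularities, where it equals the number of exceptional curves. Since every Wahl and $\Q Eq$ singularity is a quotient singularity, $K_X$ is $\Q$-Cartier, so there is a unique $\Q$-divisor $f^{*}K_X = K_Z + \Delta$ with $\Delta = \sum_j d_j E_j$ supported on the exceptional locus $\{E_j\}$ and determined by the conditions $f^{*}K_X \cdot E_k = 0$ for all $k$; uniqueness holds because the exceptional intersection matrix is negative definite. From $f^{*}K_X \cdot \Delta = 0$ one obtains
\begin{equation*}
K_X^2 = (f^{*}K_X)^2 = (f^{*}K_X)\cdot(K_Z + \Delta) = (f^{*}K_X)\cdot K_Z = K_Z^2 + K_Z \cdot \Delta .
\end{equation*}
Because the exceptional divisors over distinct $Q_i$ are disjoint, $\Delta = \sum_i \Delta_i$ with $\Delta_i$ supported over $Q_i$, and $K_Z \cdot \Delta = \sum_i K_Z\cdot\Delta_i$. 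Hence it suffices to prove the purely local statement $K_Z \cdot \Delta_i = l_i$ for each $Q_i$.

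Next I would reduce this local statement to a finite linear-algebra computation. Every exceptional curve $E_j$ is a smooth rational curve, so adjunction gives $K_Z\cdot E_j = -2 - E_j^2$, and the coefficients $d_j$ solve the tridiagonal (resp. star-shaped) system $\Delta\cdot E_k = -K_Z\cdot E_k$ read off from the dual graph. For the three $\Q Eq$ singularities this is a direct check in three cases. For instance, for $[3,3,3;4]$ symmetry gives $\Delta = d_F F + d(E_1+E_2+E_3)$, and the equations $d_F - 3d = -1$ and $-4d_F + 3d = -2$ yield $d = 2/3$, $d_F = 1$ (so that $F$ has discrepancy $-d_F = -1$, as it must), whence $K_Z\cdot\Delta = d_F(K_Z\cdot F) + 3d(K_Z\cdot E_1) = 1\cdot 2 + 2 = 4 = l$. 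The cases $[4,2,4;3]$ and $[2,3,6;2]$ are handled identically.

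For the Wahl singularities there are infinitely many, so I would argue by induction on the recursive description of their resolution chains: every Wahl string $[b_1,\dots,b_l]$ arises from the single curve $[4]$ by iterating the two operations $[b_1,\dots,b_l]\mapsto[2,b_1,\dots,b_{l-1},b_l+1]$ and $[b_1,\dots,b_l]\mapsto[b_1+1,b_2,\dots,b_l,2]$. The base case $[4]$ gives $d = 1/2$ and $K_Z\cdot\Delta = 1 = l$. The inductive step is to show that each operation raises both $l$ and $K_Z\cdot\Delta$ by exactly $1$; this is verified by tracking how the solution $\mathbf d$ of the tridiagonal system transforms when a $(-2)$-curve is prepended and the last self-intersection drops by one.

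The hard part will be precisely this inductive step: unlike the $\Q Eq$ case it cannot be dispatched by a finite computation, and one must control the discrepancies uniformly across the entire family of Wahl chains. The cleanest route is to note that each elementary operation on strings corresponds to an explicit birational modification between the two cyclic quotient singularities, under which the change in $K^2 - K_Z^2$ is computed once and for all; equivalently, one identifies $K_Z\cdot\Delta = -\Delta^2$ with the quantity governed by the ``zero continued fraction'' recursion characterizing Wahl singularities. Note that the additivity $K_Z\cdot\Delta_i = l_i$ is genuinely special to these singularities — for an ordinary node $[2]$ one has $\Delta = 0$ and $K_Z\cdot\Delta = 0 \ne 1$ — so the argument must use the defining arithmetic of Wahl and $\Q Eq$ strings, not merely that the exceptional curves are rational.
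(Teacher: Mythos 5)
The paper offers no proof to compare against: Proposition \ref{K2} is invoked as a ``known fact,'' so your proposal is judged on its own. Your global reduction is correct and complete: $f^*K_X=K_Z+\Delta$ exists and is unique by negative definiteness, $f^*K_X\cdot\Delta=0$ gives $K_X^2=K_Z^2+K_Z\cdot\Delta$, and disjointness of the exceptional loci localizes the claim to $K_Z\cdot\Delta_i=l_i$. Your three $\Q Eq$ checks are right (I verified the other two: for $[4,2,4;3]$ one gets $d_F=1$, $d_1=d_3=3/4$, $d_2=1/2$, so $K_Z\cdot\Delta=\frac32+\frac32+1=4$; for $[2,3,6;2]$, $d_F=1$ and $d=(\frac12,\frac23,\frac56)$, so $K_Z\cdot\Delta=\frac23+\frac{10}3=4$), and the value $d_F=1$ matches the paper's statement that these singularities have discrepancy $-1$. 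One justification is misstated: the $\Q Eq$ singularities are \emph{not} quotient singularities (quotient singularities are log terminal, while these are strictly log canonical); $K_X$ is $\Q$-Cartier here because each admits an index-one cover by a simple elliptic singularity, as the paper notes in the introduction.

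The genuine gap is where you said it would be: the inductive step for Wahl chains is asserted (``verified by tracking how $\mathbf d$ transforms'') but never carried out, and tracking the full vector $\mathbf d$ under the two operations is messier than the phrase suggests, since the coefficients change globally (e.g.\ $(\frac12)\mapsto(\frac23,\frac13)\mapsto(\frac34,\frac12,\frac14)$). It can be closed cleanly without the induction on $\mathbf d$: writing $d_i=1-h_i$, the defining equations $(b_i-2)+d_{i-1}-b_id_i+d_{i+1}=0$ say that $h$ solves the homogeneous recursion $h_{i-1}-b_ih_i+h_{i+1}=0$ with $h_0=h_{l+1}=1$, so $(b_i-2)h_i=h_{i-1}-2h_i+h_{i+1}$ telescopes and yields
\begin{equation*}
K_Z\cdot\Delta=\sum_{i=1}^{l}(b_i-2)\,d_i=\sum_{i=1}^{l}(b_i-2)-2+h_1+h_l=\sum_{i=1}^{l}(b_i-2)-2+\frac{2+q+q'}{n},
\end{equation*}
the classical discrepancy formula for $\frac1n(1,q)$ with $qq'\equiv 1 \pmod n$. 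For $\frac{1}{n^2}(1,na-1)$ one checks directly that $q'=n(n-a)-1$, so $2+q+q'=n^2$ and the last term is $1$; and your recursion (which does generate all Wahl chains, by Koll\'ar--Shepherd-Barron) gives $\sum(b_i-2)=l+1$ trivially, since each operation adds $3$ to $\sum b_i$ and $1$ to $l$, with base $[4]$ giving $2$. Hence $K_Z\cdot\Delta=(l+1)-2+1=l$, completing your argument. Your closing sanity check (the node $[2]$ has $\Delta=0$) correctly pinpoints that the statement depends on this Wahl arithmetic and not merely on rationality of the exceptional curves.
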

\bigskip

\textbf{Positivity of $K$.} Each of our examples $X$ will be built from a contraction $Z \to X$. We then manage to write the pullback in $Z$ of $K_X$ with positive rational coefficients. After that, we intersect it with each of the curves in its support, and verify that these intersection numbers are nonnegative. This implies that $K_X$ is nef, and so $K_{X_t}$ is nef, where $X \subset \X \to 0 \in \D$ is any partial $\Q$-Gorenstein smoothing of $X$. Given that $K_X$ is nef, we use the Nakai-Moishezon criteria to prove ampleness. We determine precisely which are the curves whose intersection with $K_X$ is zero, and then contract them (if necessary) to obtain the canonical model. It turns out that, in our examples, the pullback of $K_X$ under the contraction $Z \to X$ can be written with a $\Q$-effective support which contains all the components of a fiber of $Z \to \P^1$. Thus, we only need to look at components of fibers in $Z \to \P^1$, that is a simple verification. Notice that ampleness of $K_X$ implies ampleness of $K_{X_t}$ for any $t$.
\bigskip

\textbf{Fundamental group.} We use the same method as in \cite{LP07} to compute the fundamental group of $X_t$ from the data in $X$. The difference will be the local fundamental group of the $\Q$Eq singularities. Below we compute these groups using \cite{Mum61}.

Let $(P\in X)$ be a $\Q$Eq singularity, and let $\tilde{X}$ be its minimal resolution. We are going to compute the fundamental group of the complement in $\tilde{X}$ of the exceptional divisor $E_1+E_2+E_3+F$ (see \S \ref{s-1} for notation). Let $\alpha_i$ be a loop around the curve $E_i$, and let $\gamma$ be a loop around the central curve $F$. By \cite[p.12]{Mum61}, we have that the fundamental group is generated by these loops subject to the relations: $$ \alpha_i \gamma= \gamma \alpha_i \, , 1=\gamma \alpha_1^{E_1^2} \, , 1=\gamma \alpha_2^{E_2^2} \, , 1=\gamma \alpha_3^{E_3^2} \, ,1=\alpha_1 \alpha_2 \alpha_3 \gamma^{F^2}.$$

From these relations, for the singularity $[4,2,4;3]$, we get that $\gamma=\alpha_1^4=\alpha_3^4$ and $\alpha_2=\alpha_1^{-1}\alpha_3^{11}$. Hence $$\pi_1 \big( \tilde{X}\setminus [4,2,4;3] \big) =\langle \alpha_1,\alpha_3 \, | \, \alpha_1^4=\alpha_3^4 \rangle.$$

Analogously, for the other two $\Q Eq$  singularities we get that $$\pi_1 \big( \tilde{X}\setminus [3,3,3;4] \big) =\langle \alpha_1,\alpha_3 \, | \, \alpha_1^3=\alpha_3^3 \rangle$$ and $\pi_1 \big( \tilde{X}\setminus [2,3,6;2] \big) =\langle \alpha_1,\alpha_3 \, | \, \alpha_1^2=\alpha_3^6 \rangle.$

%----------------------------------------------------------------------------------------------------------------------------------------------
\section{$K^2=1$} \label{s1}

%----------------------------------------------------------------------------------------------------------------------------------------------
\subsection{$[4,2,4;3]$} \label{[4,2,4;3]}

Let $L_1,\ldots ,L_6$ be lines in general position in $\P^2$. Consider the pencil $$\Gamma_{\lambda, \mu}=\{\lambda L_1L_2L_3+\mu L_4L_5L_6=0\}$$
with $[\lambda: \mu] \in \P^1$, and let $Y \to \P^1$ be the elliptic fibration obtained by blowing up $\P^2$ at the base points. Note that there are two $I_3$ singular fibers in $Y\to \P^1$, which consists of the strict transforms of $L_1,L_2,L_3$ and $L_4,L_5,L_6$. There are also six nodal singular fibers. Let $Z \to Y$ be the blow-up on $10$ points of $Y$ as shown in the picture below. Relevant curves are the sections $E_1, E_2, E_3$, the chosen nodal fiber $F$, and the exceptional curves $G_1,\ldots, G_{10}$ of $Z \to Y$, whose subindices follow the order of the blow-ups.

\begin{figure}[htbp]
\scalebox{0.8} % Change this value to rescale the drawing.
{
\begin{pspicture}(0,-2.1690094)(16.51125,2.1690094)
\psline[linewidth=0.04cm](9.644037,1.1519568)(8.345312,-0.5290094)
\psline[linewidth=0.04cm](9.420293,1.1519568)(10.511049,-1.0340924)
\psline[linewidth=0.04cm](8.485312,-0.86900944)(10.622922,-0.8519216)
\psline[linewidth=0.04cm,linestyle=dashed,dash=0.16cm 0.16cm](15.466428,1.0919567)(14.431607,-1.0940924)
\psline[linewidth=0.04cm](15.242682,1.0919567)(16.333439,-1.0940924)
\psline[linewidth=0.04cm](14.347702,-0.9119216)(16.445312,-0.9119216)
\usefont{T1}{ptm}{m}{n}
\rput(14.830937,0.39599058){\scriptsize $L_6$}
\usefont{T1}{ptm}{m}{n}
\rput(16.110937,-0.14400941){\scriptsize $L_5$}
\usefont{T1}{ptm}{m}{n}
\rput(15.110937,-0.74400944){\scriptsize $L_4$}
\usefont{T1}{ptm}{m}{n}
\rput(9.110937,0.8359906){\scriptsize $L_3$}
\usefont{T1}{ptm}{m}{n}
\rput(9.890938,0.7359906){\scriptsize $L_2$}
\usefont{T1}{ptm}{m}{n}
\rput(9.290937,-0.68400943){\scriptsize $L_1$}
\usefont{T1}{ptm}{m}{n}
\rput(12.410937,0.87599057){\scriptsize $F$}
\usefont{T1}{ptm}{m}{n}
\rput(11.150937,1.8559905){\scriptsize $E_3$}
\usefont{T1}{ptm}{m}{n}
\rput(13.570937,-0.18400942){\scriptsize $E_2$}
\usefont{T1}{ptm}{m}{n}
\rput(13.570937,-1.5240095){\scriptsize $E_1$}
\psline[linewidth=0.04cm](1.2040375,1.2119567)(0.16921687,-0.9740924)
\psline[linewidth=0.04cm](0.9802925,1.2119567)(2.0710495,-0.9740924)
\psline[linewidth=0.04cm](0.0853125,-0.7919216)(2.182922,-0.7919216)
\psline[linewidth=0.04cm](6.1264277,1.2119567)(5.091607,-0.9740924)
\psline[linewidth=0.04cm](5.9026823,1.2119567)(6.993439,-0.9740924)
\psline[linewidth=0.04cm](5.0077024,-0.7919216)(7.105312,-0.7919216)
\psbezier[linewidth=0.04](3.3855512,2.070762)(3.0779018,-0.29745814)(4.1686587,0.32712734)(3.805073,0.76954204)(3.4414876,1.2119567)(3.4974236,-1.4425315)(3.55336,-1.5726535)
\psline[linewidth=0.04cm](1.4837188,-0.21938495)(5.623001,-0.19336055)
\psbezier[linewidth=0.04](1.2879418,-0.6878241)(1.2879418,-1.7287998)(6.713758,-1.338434)(5.9586186,-0.66179967)
\psbezier[linewidth=0.04](0.86842,0.17098098)(0.50483435,0.24905416)(0.39296186,1.5242496)(1.1760694,1.7324446)(1.9591769,1.9406399)(6.2383,1.6543715)(6.517981,1.2119567)(6.7976623,0.76954204)(6.517981,0.6133957)(6.210332,0.24905416)
\usefont{T1}{ptm}{m}{n}
\rput(5.3909373,0.33599058){\scriptsize $L_6$}
\usefont{T1}{ptm}{m}{n}
\rput(6.8109374,0.07599058){\scriptsize $L_5$}
\usefont{T1}{ptm}{m}{n}
\rput(6.2109375,-0.6040094){\scriptsize $L_4$}
\usefont{T1}{ptm}{m}{n}
\rput(0.3309375,0.03599058){\scriptsize $L_3$}
\usefont{T1}{ptm}{m}{n}
\rput(1.6709375,0.37599057){\scriptsize $L_2$}
\usefont{T1}{ptm}{m}{n}
\rput(0.9909375,-0.62400943){\scriptsize $L_1$}
\usefont{T1}{ptm}{m}{n}
\rput(3.5509374,1.0359906){\scriptsize $F$}
\usefont{T1}{ptm}{m}{n}
\rput(2.4509375,1.5959905){\scriptsize $E_3$}
\usefont{T1}{ptm}{m}{n}
\rput(2.6309376,-0.024009418){\scriptsize $E_2$}
\usefont{T1}{ptm}{m}{n}
\rput(2.8509376,-1.1040094){\scriptsize $E_1$}
\usefont{T1}{ptm}{m}{n}
\rput(7.5867186,-0.019009417){$\longleftarrow$}
\psline[linewidth=0.04cm](12.565312,2.1109905)(12.585313,-1.5690094)
\psline[linewidth=0.04cm](12.485312,-0.049009416)(13.205313,0.7709906)
\psline[linewidth=0.04cm,linestyle=dashed,dash=0.16cm 0.16cm](12.425312,1.2709906)(13.245313,0.63099056)
\psline[linewidth=0.04cm](9.845312,0.13099058)(10.685312,0.13099058)
\psline[linewidth=0.04cm](10.505313,-0.3290094)(12.325313,-0.34900942)
\psline[linewidth=0.04cm](12.765312,-0.3690094)(14.965313,-0.34900942)
\psline[linewidth=0.04cm,linestyle=dashed,dash=0.16cm 0.16cm](12.105312,-0.28900942)(12.745313,-0.98900944)
\psline[linewidth=0.04cm,linestyle=dashed,dash=0.16cm 0.16cm](10.565312,0.27099058)(10.565312,-0.46900943)
\psbezier[linewidth=0.04](9.465313,-1.3290094)(12.825313,-2.1490095)(14.725312,-1.5652595)(15.625313,-1.4490094)
\psbezier[linewidth=0.04](8.325313,0.5309906)(8.725312,1.0309906)(8.886963,1.7529718)(9.985312,1.9509906)(11.083662,2.1490095)(14.845312,1.8909906)(15.645312,1.5509906)(16.445312,1.2109905)(15.705313,0.43099058)(15.565312,0.19099058)
\psline[linewidth=0.04cm,linestyle=dashed,dash=0.16cm 0.16cm](9.645312,-0.7290094)(9.705313,-1.4890094)
\psline[linewidth=0.04cm,linestyle=dashed,dash=0.16cm 0.16cm](15.4453125,-0.7890094)(15.485312,-1.5290095)
\psline[linewidth=0.04cm,linestyle=dashed,dash=0.16cm 0.16cm](12.685312,-1.2690095)(12.005313,-1.8090094)
\psline[linewidth=0.04cm,linestyle=dashed,dash=0.16cm 0.16cm](8.325313,0.6509906)(9.005313,0.05099058)
\psbezier[linewidth=0.04,linestyle=dashed,dash=0.16cm 0.16cm](8.265312,-0.3090094)(8.665313,-0.4290094)(8.905313,-0.6290094)(8.605312,-1.0490094)
\usefont{T1}{ptm}{m}{n}
\rput(15.840938,-1.1640095){\scriptsize $G_{10}$}
\usefont{T1}{ptm}{m}{n}
\rput(12.170938,-0.7240094){\scriptsize $G_9$}
\usefont{T1}{ptm}{m}{n}
\rput(12.050938,-1.4440094){\scriptsize $G_8$}
\usefont{T1}{ptm}{m}{n}
\rput(9.910937,-1.1440095){\scriptsize $G_7$}
\usefont{T1}{ptm}{m}{n}
\rput(8.850938,-0.4240094){\scriptsize $G_6$}
\usefont{T1}{ptm}{m}{n}
\rput(8.370937,0.27599058){\scriptsize $G_5$}
\usefont{T1}{ptm}{m}{n}
\rput(10.810938,-0.06400942){\scriptsize $G_4$}
\usefont{T1}{ptm}{m}{n}
\rput(10.250937,0.2959906){\scriptsize $G_3$}
\usefont{T1}{ptm}{m}{n}
\rput(13.030937,1.0559906){\scriptsize $G_2$}
\usefont{T1}{ptm}{m}{n}
\rput(13.090938,0.27599058){\scriptsize $G_1$}
\usefont{T1}{ptm}{m}{n}
\rput(12.430938,0.45599058){\tiny -7}
\usefont{T1}{ptm}{m}{n}
\rput(11.632031,-0.22400942){\tiny -4}
\usefont{T1}{ptm}{m}{n}
\rput(15.686719,-0.7840094){\tiny -3}
\usefont{T1}{ptm}{m}{n}
\rput(15.09125,-0.024009418){\tiny -2}
\usefont{T1}{ptm}{m}{n}
\rput(9.152031,0.27599058){\tiny -4}
\usefont{T1}{ptm}{m}{n}
\rput(11.252031,-1.5240095){\tiny -4}
\usefont{T1}{ptm}{m}{n}
\rput(9.872031,-0.7040094){\tiny -4}
\usefont{T1}{ptm}{m}{n}
\rput(9.866718,-0.16400942){\tiny -3}
\usefont{T1}{ptm}{m}{n}
\rput(15.63125,-0.08400942){\tiny -2}
\usefont{T1}{ptm}{m}{n}
\rput(10.23125,-0.0040094173){\tiny -2}
\usefont{T1}{ptm}{m}{n}
\rput(13.05125,1.7959906){\tiny -2}
\usefont{T1}{ptm}{m}{n}
\rput(12.79125,0.49599057){\tiny -2}
\end{pspicture}
}
\end{figure}

Let $Z \to X$ be the contraction of the following three Wahl configurations, and one $[4,2,4;3]$: $$E_1=[4], \ E_2=[4], \ G_1,F,E_3,L_5,L_4=[2,7,2,2,3],$$   $$L_1,G_3,L_3;L_2=[4,2,4;3].$$ Then we have, by applying several times the lemmas in Section \ref{s0}, that $X$ has no-local-to-global obstructions to deform (see \cite{LP07,Urz13}).

We compute via Proposition \ref{K2} that $K_X^2=-10+1+1+4+5=1$, and so a $\Q$-Gorenstein smoothing of $X$ is a nonsingular projective surface with $q=p_g=0$ and $K^2=1$.

We have that $$K_Z \equiv \frac{-F}{2}+\frac{-L_1}{2}+\frac{-L_2}{2}+\frac{-L_3}{2}+\frac{G_2}{2}+\frac{G_3}{2}+\frac{3G_4}{2}+\frac{G_5}{2}+
\frac{G_7}{2}+\frac{G_8}{2}+\frac{G_9}{2}+G_{10}$$ and, by adding the discrepancies, the pullback $f^*(K_X)$ is numerically equivalent to $$\frac{7}{18}F+\frac{1}{4}L_1+\frac{1}{2}L_2+\frac{1}{4}L_3+\frac{4}{9}G_1+\frac{1}{2}G_2+G_3+\frac{3}{2}G_4+\frac{1}{2}G_5+$$ $$\frac{1}{2}G_7+\frac{1}{2}G_8+\frac{1}{2}G_9+G_{10}+\frac{1}{2}E_1+\frac{1}{2}E_2+\frac{7}{9}E_3+\frac{5}{9}L_4+\frac{6}{9}L_5.$$

As explained in Section \ref{s0}, we now intersect $f^*(K_X)$ with all the curves in its $\Q$-effective support. One can verify that these numbers are nonnegative. In addition, its support contains all the components of a fiber of $Z \to \P^1$, mainly $G_1, G_2, F, G_8, G_9$. Therefore, a curve $\Gamma$ in $X$ with $K_X \cdot \Gamma=0$ has strict transform $\Gamma$ in $Z$ with $\Gamma \cdot f^*(K_X)=0$, and it is part of a fiber. One can verify this cannot happen, so by the Nakai-Moishezon criteria, the canonical class $K_X$ of $X$ is ample.

In this way, we have shown that $X$ is a KSBA smoothable surface, and also the existence of KSBA surfaces $X'$ with one $[4,2,4;3]$ singularity, $K^2=1$, and $q=p_g=0$. Using the technique explained in \cite{Urz13}, one can prove that $X'$ is rational.

For the fundamental group we do the following. It is known that given a singularity $\frac{1}{m}(1,q)$, the fundamental group of its link is $\Z /m\Z$. If $E_1+\ldots+E_s$ represents the exceptional divisor of its minimal resolution, which is a chain of $\P^1$'s in that order, then the group is generated by a loop around $E_1$ (or by a loop around $E_s$) \cite{Mum61}. Another known fact is that given two disjoint exceptional configurations $W_1$ and $W_2$, and a $\P^1$ which intersects one component of the exceptional divisor $W_1$ and one component of the exceptional divisor of $W_2$ at one point each, then we get that loops around these components are homotopic in the fundamental group of the complement, exactly as used in \cite{LP07}.

In this example, using the curve $G_{10}$, we get that loops around $E_1$ and loops around $L_4$ are homotopic. Since the order of the fundamental groups of theirs links are coprime, we conclude that both of these loops are trivial in the complement. Then by using the curve $G_9$, we get that a loop around $E_2$ is trivial, and then using the curves $G_4, G_5$ and $G_7$, we obtain that loops around $G_3, L_3$ and $L_1$ are trivial. Finally, we know (by the Mumford's relations explained in Section \ref{s0})  that $1=\gamma \alpha^{-4}$ where $\gamma$ is a loop around $L_2$ and $\alpha$ is a loop around $L_1$. Since $\alpha$ is trivial, we conclude that $\gamma$ is trivial. Hence the fundamental group of the complement of the exceptional configurations in $Z$ is trivial. This is enough to conclude that a $\Q$-Gorenstein smoothing of $X$ is simply connected; cf. \cite{LP07}.

%---------------------------------------------------------------------------------------------
\subsection{$[3,3,3;4]$} \label{[3,3,3;4]}

Let $L_1, L_2, L_3, L$ be lines in general position in $\P^2$. Let $C$ be a general conic. Consider the pencil $$\Gamma_{\lambda,\mu}=\{\lambda L_1L_2L_3 +\mu LC=0\}$$
with $[\lambda:\mu]\in \P^1$, and let $Y \to \P^1$ be the elliptic fibration obtained by blowing up $\P^2$ at the base points. Note that there is one $I_3$ and one $I_2$ singular fibers, corresponding to the proper transforms of $L_1,L_2,L_3$ and $L,C$ respectively. We also get other $7$ nodal singular fibers. Let $Z \to Y$ be the blow-up on $9$ points of $Y$ as shown in the picture below. Relevant curves are the sections $E_1, E_2$, the chosen nodal fiber $F$, and the exceptional curves $G_1,\ldots, G_{9}$ of $Z \to Y$, whose subindices follow the order of the blow-ups.

The computations below are as we did in the previous example, so we omit details.

\begin{figure}[htbp]
\scalebox{0.8} % Change this value to rescale the drawing.
{
\begin{pspicture}(0,-1.87)(14.645938,1.87)
\psline[linewidth=0.04cm](1.42,1.19)(0.02,-0.79)
\psline[linewidth=0.04cm](0.0,-0.65)(2.78,-0.65)
\psline[linewidth=0.04cm](1.24,1.19)(2.7,-0.81)
\psline[linewidth=0.04cm](5.94,1.79)(5.94,-1.13)
\rput{-89.31165}(5.3603477,6.1651363){\psarc[linewidth=0.04](5.8,0.37){0.62}{0.0}{180.0}}
\psbezier[linewidth=0.04](3.8,1.71)(3.8,0.61)(4.48,-0.07)(4.34,0.57)(4.2,1.21)(3.8,0.01)(3.88,-0.87)
\psbezier[linewidth=0.04](2.2,-0.31)(3.08,-0.27)(5.0,-0.39)(6.06,-0.83)
\psbezier[linewidth=0.04](1.44,0.79)(1.82,1.51)(5.18,1.33)(6.1,1.37)
\psline[linewidth=0.04cm](9.26,1.15)(7.8,-0.33)
\psline[linewidth=0.04cm](7.84,-0.69)(10.52,-0.69)
\psline[linewidth=0.04cm](9.08,1.15)(10.5,-0.81)
\psline[linewidth=0.04cm](13.78,1.75)(13.78,-1.17)
\rput{-89.31165}(13.185914,13.859389){\psarc[linewidth=0.04,linestyle=dashed,dash=0.16cm 0.16cm](13.606409,0.257051){0.6467318}{0.0}{180.0}}
\psbezier[linewidth=0.04](11.16,-0.39)(11.04,-1.67)(13.4,-1.85)(13.24,-0.89)
\psbezier[linewidth=0.04](9.8,1.01)(8.96,1.85)(13.02,1.29)(13.94,1.33)
\psbezier[linewidth=0.04,linestyle=dashed,dash=0.16cm 0.16cm](7.8,-0.19)(8.06,-0.17)(8.28,-0.65)(7.94,-0.79)
\psline[linewidth=0.04cm,linestyle=dashed,dash=0.16cm 0.16cm](9.8,1.13)(9.24,0.67)
\psline[linewidth=0.04cm](9.82,-0.01)(10.7,-0.01)
\psline[linewidth=0.04cm](10.52,-0.57)(11.26,-0.57)
\psline[linewidth=0.04cm,linestyle=dashed,dash=0.16cm 0.16cm](10.6,0.07)(10.64,-0.63)
\psline[linewidth=0.04cm,linestyle=dashed,dash=0.16cm 0.16cm](13.16,-0.97)(13.9,-0.99)
\psline[linewidth=0.04cm](11.76,1.67)(11.78,-1.01)
\psline[linewidth=0.04cm](11.68,-0.21)(12.28,0.41)
\psline[linewidth=0.04cm,linestyle=dashed,dash=0.16cm 0.16cm](11.66,0.85)(12.3,0.29)
\psline[linewidth=0.04cm,linestyle=dashed,dash=0.16cm 0.16cm](11.66,-0.73)(12.26,-1.61)
\usefont{T1}{ptm}{m}{n}
\rput(6.245625,0.415){\scriptsize $L$}
\usefont{T1}{ptm}{m}{n}
\rput(0.905625,0.055){\scriptsize $L_3$}
\usefont{T1}{ptm}{m}{n}
\rput(1.745625,0.055){\scriptsize $L_2$}
\usefont{T1}{ptm}{m}{n}
\rput(1.285625,-0.465){\scriptsize $L_1$}
\usefont{T1}{ptm}{m}{n}
\rput(5.755625,0.415){\scriptsize $C$}
\usefont{T1}{ptm}{m}{n}
\rput(3.805625,0.555){\scriptsize $F$}
\usefont{T1}{ptm}{m}{n}
\rput(2.885625,1.075){\scriptsize $E_1$}
\usefont{T1}{ptm}{m}{n}
\rput(3.205625,-0.125){\scriptsize $E_2$}
\usefont{T1}{ptm}{m}{n}
\rput(7.161406,0.16){$\longleftarrow$}
\usefont{T1}{ptm}{m}{n}
\rput(9.025625,-0.885){\scriptsize $L_1$}
\usefont{T1}{ptm}{m}{n}
\rput(9.405625,0.295){\scriptsize $L_2$}
\usefont{T1}{ptm}{m}{n}
\rput(8.505625,0.715){\scriptsize $L_3$}
\usefont{T1}{ptm}{m}{n}
\rput(12.485625,1.235){\scriptsize $E_1$}
\usefont{T1}{ptm}{m}{n}
\rput(11.385625,-1.385){\scriptsize $E_2$}
\usefont{T1}{ptm}{m}{n}
\rput(11.605625,0.535){\scriptsize $F$}
\usefont{T1}{ptm}{m}{n}
\rput(13.595625,0.555){\scriptsize $C$}
\usefont{T1}{ptm}{m}{n}
\rput(14.385625,0.335){\scriptsize $L$}
\usefont{T1}{ptm}{m}{n}
\rput(13.485625,-1.145){\scriptsize $G_9$}
\usefont{T1}{ptm}{m}{n}
\rput(9.745625,0.795){\scriptsize $G_8$}
\usefont{T1}{ptm}{m}{n}
\rput(12.165625,-1.005){\scriptsize $G_7$}
\usefont{T1}{ptm}{m}{n}
\rput(8.325625,-0.405){\scriptsize $G_6$}
\usefont{T1}{ptm}{m}{n}
\rput(10.845625,-0.165){\scriptsize $G_5$}
\usefont{T1}{ptm}{m}{n}
\rput(10.845625,-0.725){\scriptsize $G_4$}
\usefont{T1}{ptm}{m}{n}
\rput(10.225625,0.175){\scriptsize $G_3$}
\usefont{T1}{ptm}{m}{n}
\rput(12.205625,0.655){\scriptsize $G_2$}
\usefont{T1}{ptm}{m}{n}
\rput(12.245625,0.035){\scriptsize $G_1$}
\usefont{T1}{ptm}{m}{n}
\rput(11.58625,0.155){\tiny -6}
\usefont{T1}{ptm}{m}{n}
\rput(12.543282,-1.365){\tiny -5}
\usefont{T1}{ptm}{m}{n}
\rput(9.846719,-0.265){\tiny -4}
\usefont{T1}{ptm}{m}{n}
\rput(13.621407,-0.0050){\tiny -3}
\usefont{T1}{ptm}{m}{n}
\rput(10.845938,-0.445){\tiny -2}
\usefont{T1}{ptm}{m}{n}
\rput(10.281406,-0.165){\tiny -3}
\usefont{T1}{ptm}{m}{n}
\rput(9.141406,-0.545){\tiny -3}
\usefont{T1}{ptm}{m}{n}
\rput(8.541407,0.215){\tiny -3}
\usefont{T1}{ptm}{m}{n}
\rput(11.025937,1.315){\tiny -2}
\usefont{T1}{ptm}{m}{n}
\rput(11.925938,0.275){\tiny -2}
\usefont{T1}{ptm}{m}{n}
\rput(14.045938,0.155){\tiny -2}
\end{pspicture}
}

\end{figure}

$E_2,G_4=[5,2], \ G_1,F,E_1,C=[2,6,2,3], \ L_1,L_3,G_3;L_2=[3,3,3;4]$

Let $f \colon Z \to X$ be the contraction of these three configurations. Then we have $K^2_X=-9+2+4+4=1$, and $$K_Z\equiv \frac{-F}{2}+\frac{-L_1}{2}+\frac{-L_2}{2}+\frac{-L_3}{2}+\frac{G_2}{2}+\frac{G_3}{2}+\frac{3}{2}G_4+3G_5+\frac{G_7}{2}+\frac{G_8}{2}+G_9$$

\begin{eqnarray*}
f^*(K_X) & \equiv & \frac{5}{14}F+\frac{1}{6}L_1+\frac{1}{2}L_2+\frac{1}{6}L_3+\frac{3}{7}G_1+\frac{1}{2}G_2+\frac{7}{6}G_3+\frac{11}{6}G_4\\
         && +3G_5+\frac{1}{2}G_7+\frac{1}{2}G_8+G_9+\frac{5}{7}E_1+\frac{2}{3}E_2+\frac{4}{7}C
\end{eqnarray*}

As before one proves that $X$ has no-local-to-global obstructions to deform, and that $X$ is a KSBA surface. It produces KSBA surfaces with the singularity $[3,3,3;4]$. These surfaces are rational via \cite{HTU13} (see \cite{Urz13}).

Using the curve $G_9$ we get that loops around $E_2$ and loops around $C$ are homotopic. Since the orders of the fundamental groups of its links are coprime, we get that both loops are trivial. Then by using $G_5$ and $G_8$ we get that loops around $G_3$ and $L_2$ are trivial. From the construction of the pencil, we see that there exists a section passing through $L_3$ and $F$. By using this section we get that loops around these curves are homotopic, and since loops around $F$ are trivial, then we have that loops around $L_3$ are also trivial. Finally, using $G_6$ we have that loops around $L_1$ are trivial, and we conclude that the fundamental group of the complement of these configurations in $Z$ is trivial. As before, this is enough to conclude that a $\Q$-Gorenstein smoothing of $X$ is simply connected; cf. \cite{LP07}.

%----------------------------------------------------------------------------------------------------------
\subsection{$[2,3,6;2]$} \label{[2,3,6;2]}

Let $C_1,C_2,L_1$ and $L_2$ be two conics and two lines respectively in general position in $\P^2$. Consider the pencil
$$\Gamma_{\lambda, \mu}=\{\lambda C_1L_1+\mu C_2L_2=0\}$$
with $[\lambda : \mu] \in \P^1$, and let $Y \to \P^1$ be the elliptic fibration obtained by blowing up $\P^2$ at the base points. Note that there are two $I_2$ singular fibers in $Y\to \P^1$, which consist of the strict transform of $C_1,L_1$ and $C_2,L_2$. There are also eight nodal singular fibers. Let $M$ be a line through one point in $L_1\cap C_2 $ and the node of one of the nodal singular fibers. Then after blowing up the base points, $M$ becomes a double section for the elliptic fibration $Y \to \P^1$. Let $Z\to Y$ be the blow-up on $6$ points of $Y$ as shown in the picture below. Relevant curves are the section $E_1$, the chosen nodal fibers $F_1,F_2$ and the exceptional curves $G_1, \ldots, G_6$ of $Z \to Y$, whose subindices follow the order of the blow-ups.

The computations below are as we did in the first example, so we omit details.

\begin{figure}[htbp]
\scalebox{0.9} % Change this value to rescale the drawing.
{
\begin{pspicture}(0,-2.2804883)(13.107758,2.3004882)
\psline[linewidth=0.04cm](0.9218401,2.0995116)(0.9218401,-1.7004883)
\psline[linewidth=0.04cm](5.52184,2.0995116)(5.52184,-1.7004883)
\psbezier[linewidth=0.04](2.26184,2.0195117)(2.10184,0.35951173)(2.78184,-0.58048826)(2.76184,0.2595117)(2.7418401,1.0995117)(2.10184,-0.10048828)(2.26184,-1.6804882)
\psbezier[linewidth=0.04](3.90184,1.9995117)(3.7418401,0.33951172)(4.42184,-0.6004883)(4.40184,0.23951171)(4.38184,1.0795118)(3.7418401,-0.12048828)(3.90184,-1.7004883)
\rput{-90.13889}(0.5241758,1.0019301){\psarc[linewidth=0.04](0.76184005,0.23951171){0.76}{-8.391876}{189.60121}}
\rput{-90.13889}(5.175375,5.601868){\psarc[linewidth=0.04](5.38184,0.21951172){0.76}{-11.171042}{194.17514}}
\psbezier[linewidth=0.04](0.8418401,1.6395117)(1.46184,1.5195117)(4.3218403,1.4395118)(4.70184,1.4395118)(5.08184,1.4395118)(5.44184,1.4795117)(5.3218403,0.85951173)
\psbezier[linewidth=0.04](1.6218401,0.8395117)(1.22184,0.8195117)(0.9818401,0.13951172)(1.58184,0.13951172)(2.1818402,0.13951172)(2.54184,0.2595117)(2.6818402,0.23951171)
\psbezier[linewidth=0.04](2.88184,0.2595117)(3.6818402,0.27951172)(4.28184,-0.42048827)(4.18184,-0.7604883)(4.08184,-1.1004883)(3.4618402,-1.5804883)(3.9618402,-1.9204882)(4.46184,-2.2604883)(4.75525,-0.8425009)(4.92184,-0.52048826)(5.08843,-0.19847564)(5.3218403,0.119511716)(6.3418403,0.07951172)
\usefont{T1}{ptm}{m}{n}
\rput(1.1774162,-1.6754882){\scriptsize $L_1$}
\usefont{T1}{ptm}{m}{n}
\rput(5.2374163,-1.5354882){\scriptsize $L_2$}
\usefont{T1}{ptm}{m}{n}
\rput(1.3774163,-0.5354883){\scriptsize $C_1$}
\usefont{T1}{ptm}{m}{n}
\rput(6.1174164,0.8845117){\scriptsize $C_2$}
\usefont{T1}{ptm}{m}{n}
\rput(3.2274163,-0.03548828){\scriptsize $M$}
\usefont{T1}{ptm}{m}{n}
\rput(2.9674163,0.6245117){\scriptsize $F_1$}
\usefont{T1}{ptm}{m}{n}
\rput(4.627416,0.48451173){\scriptsize $F_2$}
\psline[linewidth=0.04cm](7.50184,2.0995116)(7.50184,-1.7004883)
\psline[linewidth=0.04cm](12.10184,2.0995116)(12.10184,-1.7004883)
\rput{-90.13889}(7.1201262,7.5819106){\psarc[linewidth=0.04,linestyle=dashed,dash=0.16cm 0.16cm](7.3418403,0.23951171){0.76}{-8.391876}{189.60121}}
\rput{-90.13889}(11.771325,12.181849){\psarc[linewidth=0.04,linestyle=dashed,dash=0.16cm 0.16cm](11.96184,0.21951172){0.76}{-11.171042}{194.17514}}
\psbezier[linewidth=0.04](8.32184,0.8995117)(7.92184,0.6995117)(7.452275,0.09951172)(7.92184,0.09951172)(8.391405,0.09951172)(8.40184,0.059511717)(8.80184,0.09951172)
\psbezier[linewidth=0.04](9.00184,0.119511716)(10.04184,0.15951172)(9.86184,-0.22048828)(9.92184,-0.70048827)(9.98184,-1.1804882)(10.44184,-1.7804883)(10.72184,-1.5004883)(11.00184,-1.2204883)(11.33525,-0.8425009)(11.50184,-0.52048826)(11.66843,-0.19847564)(11.90184,0.119511716)(12.92184,0.07951172)
\usefont{T1}{ptm}{m}{n}
\rput(7.7574162,-1.6754882){\scriptsize $L_1$}
\usefont{T1}{ptm}{m}{n}
\rput(11.817416,-1.5354882){\scriptsize $L_2$}
\usefont{T1}{ptm}{m}{n}
\rput(7.957416,-0.5354883){\scriptsize $C_1$}
\usefont{T1}{ptm}{m}{n}
\rput(12.697416,0.8845117){\scriptsize $C_2$}
\usefont{T1}{ptm}{m}{n}
\rput(8.447416,0.22451171){\scriptsize $M$}
\usefont{T1}{ptm}{m}{n}
\rput(9.167417,1.2845117){\scriptsize $F_1$}
\usefont{T1}{ptm}{m}{n}
\rput(10.7474165,0.92451173){\scriptsize $F_2$}
\psline[linewidth=0.04cm](8.92184,2.0995116)(8.92184,-1.7004883)
\psline[linewidth=0.04cm](10.52184,1.6995118)(10.52184,-1.7004883)
\psline[linewidth=0.04cm](9.70184,0.71951175)(9.70184,-0.7204883)
\psline[linewidth=0.04cm,linestyle=dashed,dash=0.16cm 0.16cm](9.84184,-0.90048826)(10.64184,-0.90048826)
\psline[linewidth=0.04cm,linestyle=dashed,dash=0.16cm 0.16cm](8.76184,0.6395117)(9.80184,0.6395117)
\psline[linewidth=0.04cm,linestyle=dashed,dash=0.16cm 0.16cm](8.76184,-0.64048827)(9.80184,-0.64048827)
\psbezier[linewidth=0.04](7.3618402,1.9795117)(8.20184,1.9595118)(10.36184,1.9395118)(11.1218405,1.8595117)(11.88184,1.7795117)(11.84184,1.9395118)(11.92184,0.8195117)
\psline[linewidth=0.04cm,linestyle=dashed,dash=0.16cm 0.16cm](9.88184,1.9995117)(10.64184,1.2195117)
\rput{-88.549866}(9.939404,10.573228){\psarc[linewidth=0.04,linestyle=dashed,dash=0.16cm 0.16cm](10.39184,0.18951172){0.45}{0.0}{180.0}}
\usefont{T1}{ptm}{m}{n}
\rput(11.087417,0.22451171){\scriptsize $G_1$}
\usefont{T1}{ptm}{m}{n}
\rput(9.947416,0.2645117){\scriptsize $G_2$}
\usefont{T1}{ptm}{m}{n}
\rput(9.2474165,0.8245117){\scriptsize $G_3$}
\usefont{T1}{ptm}{m}{n}
\rput(9.227416,-0.8154883){\scriptsize $G_4$}
\usefont{T1}{ptm}{m}{n}
\rput(9.927416,1.5645118){\scriptsize $G_5$}
\usefont{T1}{ptm}{m}{n}
\rput(10.207417,-0.7354883){\scriptsize $G_6$}
\usefont{T1}{ptm}{m}{n}
\rput(10.368139,0.16451173){\tiny -6}
\usefont{T1}{ptm}{m}{n}
\rput(8.788139,-0.23548828){\tiny -6}
\usefont{T1}{ptm}{m}{n}
\rput(9.543251,-0.23548828){\tiny -3}
\usefont{T1}{ptm}{m}{n}
\rput(11.207817,-0.6954883){\tiny -2}
\usefont{T1}{ptm}{m}{n}
\rput(11.947817,0.30451173){\tiny -2}
\usefont{T1}{ptm}{m}{n}
\rput(9.547816,2.0445118){\tiny -2}
\usefont{T1}{ptm}{m}{n}
\rput(12.507816,0.4445117){\tiny -2}
\usefont{T1}{ptm}{m}{n}
\rput(7.3678164,0.30451173){\tiny -2}
\usefont{T1}{ptm}{m}{n}
\rput(7.8278165,1.0245117){\tiny -2}
\usefont{T1}{ptm}{m}{n}
\rput(6.713295,0.28451172){$\longleftarrow$}
\usefont{T1}{ptm}{m}{n}
\rput(2.9974163,1.6645117){\scriptsize $E_1$}
\usefont{T1}{ptm}{m}{n}
\rput(8.197416,2.1445117){\scriptsize $E_1$}
\end{pspicture}
}
\end{figure}

\noindent
$F_1,E_1,L_1=[6,2,2]$, $L_2,G_2,F_2;M=[2,3,6;2]$, $K_X^2=-6+3+4=1$, $$K_Z \equiv -\frac{1}{2}F_1-\frac{1}{2}F_2+\frac{1}{2}G_3+\frac{1}{2}G_4+\frac{1}{2}G_5+\frac{1}{2}G_6$$

\begin{eqnarray*}
f^*(K_X) &\equiv & \frac{1}{4}F_1+\frac{1}{3}F_2+\frac{1}{2}E_1+\frac{1}{4}L_1+\frac{1}{2}L_2+M\\
         & & + \frac{2}{3}G_2+\frac{1}{2}G_3+\frac{1}{2}G_4+\frac{1}{2}G_5+\frac{1}{2}G_6
\end{eqnarray*}

As before one proves that $X$ has no-local-to-global obstructions to deform, and it is a KSBA surface. It produces KSBA surfaces with the singularity $[2,3,6;2]$. These surfaces are rational via \cite{Urz13}.

Let $\alpha_1,\alpha_2,\alpha_3,\gamma$ and $\beta$ be loops around $L_2,G_2,F_2,M$ and $F_1$ respectively. Then using the curve $G_1$ we get that $\alpha_3^2=1$, and by the Mumford's relations explained in Section \ref{s0} we get that $\alpha_1^2=\alpha_2^3=\alpha_3^6=1$. Using the curve $G_3$ we have that $\beta$ and $\alpha_2$ are homotopic, then $\alpha_2^{16}=1$ since $\beta$ has order 16. But $\alpha_2^3=1$ and $\alpha_2^{16}=1$ implies that $\alpha_2=1$. Then $\beta=1$ and using the curves $G_5$ and $G_6$ we get that $\alpha_3=1$ and $\gamma=1$. Finally, using Mumford's relations again we have that $1=\alpha_1 \alpha_2 \alpha_3 \gamma^{-2}$ so $\alpha_1=1$. Hence the fundamental group of the complement of the exceptional configurations in $Z$ is trivial.

%----------------------------------------------------------------------------------------------------------------------------------------------
\section{$K^2=2$} \label{s2}

\subsection{$[4,2,4;3]$} \label{[4,2,4;3]'}

Let $L_1,L_2,L_3$ and $L$ be general lines in $\P^2$, and let $M$ be a general line passing through $L_2 \cap L_3$. Consider the pencil $$\Gamma_{\lambda,\mu}=\{\lambda L_1L_2L_3+\mu L^2M=0\}$$ with $[\lambda:\mu]\in \P^1$. Let $Y \to \P^1$ be the elliptic fibration obtained by blowing up $\P^2$ at the base points. Note that there is one $I_4$ singular fiber corresponding to the triangle $L_1L_2L_3$. There is also one $I_0^*$ singular fiber which consists of the strict transform of $L,M$ and the first exceptional curves of the blow-ups at the points of intersection between $L$ and $L_1,L_2,L_3$. Let $N$ be a line passing through the intersection of $L$ with $L_3$, and by the node of one of the $I_1$ fibers of the pencil. Then after blowing up the base points, $N$ becomes a double section of the elliptic fibration. Let $Z\to Y$ be the blow-up on $13$ points of $Y$ as shown in the picture below. Relevant curves are the sections $E_2$ and $E_7$, the chosen nodal fiber $F$, and the exceptional curves $G_1, \ldots, G_{13}$ of $Z \to Y$, whose subindices follow the order of the blow-ups.

The computations below are as we did in the previous section, so we omit details, except in the case of no-local-to-global obstructions. For that, Lemma \ref{initialzero} can be easily adapted (see e.g. \cite[\S 4]{PSU13}) to have $F_1$ nodal and $F_2$ a simple normal crossings singular fiber. In our case $F_2$ is the $I_0^*$ fiber (and reduced).

\begin{figure}[htbp]
\scalebox{0.8} % Change this value to rescale the drawing.
{
\begin{pspicture}(0,-2.0301561)(15.93125,2.0101562)
\psline[linewidth=0.04cm](8.407454,0.7864247)(8.425312,-1.0698438)
\psline[linewidth=0.04cm](8.348311,0.7050185)(9.9841585,0.7050185)
\psline[linewidth=0.04cm](8.583526,-1.3118557)(10.025312,-1.3298438)
\psline[linewidth=0.04cm](8.323526,0.12652917)(9.9841585,0.12652917)
\psline[linewidth=0.04cm](8.323526,-0.46266326)(9.9841585,-0.46266326)
\psbezier[linewidth=0.04](8.665313,1.2101562)(9.165313,1.2101562)(10.525312,1.7301563)(11.194905,1.7635291)(11.864497,1.7969018)(15.125313,1.4501562)(15.4453125,1.0501562)(15.765312,0.65015626)(15.665313,0.39015624)(15.665313,-0.20984375)
\psbezier[linewidth=0.04](11.487609,-0.03398576)(11.870376,-0.03398576)(12.825047,-0.33322713)(13.025312,-0.46984375)(13.225578,-0.6064604)(13.005313,-0.6898438)(13.485312,-0.66984373)
\psline[linewidth=0.04cm](0.48483393,0.58780307)(0.46281767,-1.2186674)
\psline[linewidth=0.04cm](0.39676887,0.45721483)(1.8498425,0.45721483)
\psline[linewidth=0.04cm](0.37475258,-1.1098439)(1.8498425,-1.1098439)
\psline[linewidth=0.04cm](0.37475258,-0.02160865)(1.8498425,-0.02160865)
\psline[linewidth=0.04cm](0.37475258,-0.5657262)(1.8498425,-0.5657262)
\psbezier[linewidth=0.04](2.8185585,1.0448618)(2.620412,0.15250897)(3.3689651,-0.8921968)(3.3029163,-0.15219687)(3.2368674,0.58780307)(2.6763794,-0.34925565)(2.9853125,-1.6098437)
\psbezier[linewidth=0.04](1.3214521,0.37015602)(1.1853125,1.0701562)(1.8921843,0.7960383)(2.9253125,0.85015625)(3.9584408,0.90427417)(6.0043807,1.1348469)(6.3253126,0.79015625)(6.6462445,0.44546562)(6.4053125,-0.06984375)(6.0053124,-0.22984375)
\psbezier[linewidth=0.04](3.3853126,-0.16984375)(3.7853124,-0.16984375)(4.235665,-0.17690273)(4.4853125,-0.34984374)(4.73496,-0.52278477)(4.8853126,-0.64984375)(4.8853126,-1.1898438)
\psline[linewidth=0.04cm](11.352514,1.5036473)(11.385312,-1.7898438)
\psline[linewidth=0.04cm](11.262452,-0.46712187)(11.870376,0.2475527)
\psline[linewidth=0.04cm,linestyle=dashed,dash=0.16cm 0.16cm](11.262452,0.7889728)(11.925312,0.11015625)
\psline[linewidth=0.04cm,linestyle=dashed,dash=0.16cm 0.16cm](8.765312,1.3301562)(8.765312,0.43015626)
\psline[linewidth=0.04cm,linestyle=dashed,dash=0.16cm 0.16cm](9.705313,0.23015624)(10.185312,-0.30984375)
\psline[linewidth=0.04cm](9.998933,-0.22783192)(10.685312,-0.22984375)
\psline[linewidth=0.04cm,linestyle=dashed,dash=0.16cm 0.16cm](11.262452,1.2870793)(11.885312,1.9901563)
\psline[linewidth=0.04cm,linestyle=dashed,dash=0.16cm 0.16cm](15.785313,-0.06984375)(14.627457,-0.071955666)
\usefont{T1}{ptm}{m}{n}
\rput(2.2709374,-0.36484376){\scriptsize $N$}
\usefont{T1}{ptm}{m}{n}
\rput(0.2309375,0.53515625){\scriptsize $M$}
\usefont{T1}{ptm}{m}{n}
\rput(0.2909375,-0.28484374){\scriptsize $L$}
\usefont{T1}{ptm}{m}{n}
\rput(5.1109376,0.47515625){\scriptsize $L_3$}
\usefont{T1}{ptm}{m}{n}
\rput(5.1709375,-0.90484375){\scriptsize $L_2$}
\usefont{T1}{ptm}{m}{n}
\rput(4.6109376,-0.06484375){\scriptsize $L_1$}
\usefont{T1}{ptm}{m}{n}
\rput(3.3109374,0.21515626){\scriptsize $F$}
\usefont{T1}{ptm}{m}{n}
\rput(0.9909375,-0.18484375){\scriptsize $E_8$}
\usefont{T1}{ptm}{m}{n}
\rput(0.9709375,-1.2848438){\scriptsize $E_6$}
\usefont{T1}{ptm}{m}{n}
\rput(1.0509375,-0.72484374){\scriptsize $E_4$}
\usefont{T1}{ptm}{m}{n}
\rput(3.4709375,0.6951563){\scriptsize $E_2$}
\usefont{T1}{ptm}{m}{n}
\rput(14.260938,-1.4448438){\scriptsize $G_{11}$}
\usefont{T1}{ptm}{m}{n}
\rput(14.600938,-0.54484373){\scriptsize $G_{10}$}
\usefont{T1}{ptm}{m}{n}
\rput(14.610937,-1.0048437){\scriptsize $G_9$}
\usefont{T1}{ptm}{m}{n}
\rput(9.090938,-0.96484375){\scriptsize $G_8$}
\usefont{T1}{ptm}{m}{n}
\rput(9.690937,-0.10484375){\scriptsize $G_7$}
\usefont{T1}{ptm}{m}{n}
\rput(10.390938,-0.40484375){\scriptsize $G_6$}
\usefont{T1}{ptm}{m}{n}
\rput(10.250937,0.5751563){\scriptsize $G_5$}
\usefont{T1}{ptm}{m}{n}
\rput(9.010938,0.9351562){\scriptsize $G_4$}
\usefont{T1}{ptm}{m}{n}
\rput(11.770938,1.5351562){\scriptsize $G_3$}
\usefont{T1}{ptm}{m}{n}
\rput(11.7109375,0.65515625){\scriptsize $G_2$}
\usefont{T1}{ptm}{m}{n}
\rput(11.7109375,-0.28484374){\scriptsize $G_1$}
\usefont{T1}{ptm}{m}{n}
\rput(13.650937,0.8151562){\scriptsize $L_3$}
\usefont{T1}{ptm}{m}{n}
\rput(13.690937,-1.3248438){\scriptsize $L_2$}
\usefont{T1}{ptm}{m}{n}
\rput(13.390938,-0.28484374){\scriptsize $L_1$}
\usefont{T1}{ptm}{m}{n}
\rput(11.230938,0.37515625){\scriptsize $F$}
\usefont{T1}{ptm}{m}{n}
\rput(12.630938,1.4951563){\scriptsize $E_2$}
\usefont{T1}{ptm}{m}{n}
\rput(9.310938,-0.6248438){\scriptsize $E_4$}
\usefont{T1}{ptm}{m}{n}
\rput(8.930938,-0.02484375){\scriptsize $E_8$}
\usefont{T1}{ptm}{m}{n}
\rput(9.510938,-1.5048437){\scriptsize $E_6$}
\usefont{T1}{ptm}{m}{n}
\rput(8.290937,-0.18484375){\scriptsize $L$}
\usefont{T1}{ptm}{m}{n}
\rput(8.130938,0.71515626){\scriptsize $M$}
\usefont{T1}{ptm}{m}{n}
\rput(12.390938,-0.02484375){\scriptsize $N$}
\usefont{T1}{ptm}{m}{n}
\rput(11.211562,-0.82484376){\tiny -6}
\usefont{T1}{ptm}{m}{n}
\rput(8.266719,0.31515625){\tiny -3}
\usefont{T1}{ptm}{m}{n}
\rput(9.426719,-1.1848438){\tiny -3}
\usefont{T1}{ptm}{m}{n}
\rput(9.29125,-0.30484375){\tiny -2}
\usefont{T1}{ptm}{m}{n}
\rput(13.688594,-1.0448438){\tiny -5}
\usefont{T1}{ptm}{m}{n}
\rput(9.372031,0.8151562){\tiny -4}
\usefont{T1}{ptm}{m}{n}
\rput(9.372031,0.25515625){\tiny -4}
\usefont{T1}{ptm}{m}{n}
\rput(12.352032,-0.42484376){\tiny -4}
\usefont{T1}{ptm}{m}{n}
\rput(10.612031,1.5351562){\tiny -4}
\usefont{T1}{ptm}{m}{n}
\rput(10.29125,-0.10484375){\tiny -2}
\usefont{T1}{ptm}{m}{n}
\rput(14.01125,0.47515625){\tiny -2}
\usefont{T1}{ptm}{m}{n}
\rput(14.23125,-0.96484375){\tiny -2}
\usefont{T1}{ptm}{m}{n}
\rput(11.79125,-1.5848438){\tiny -2}
\usefont{T1}{ptm}{m}{n}
\rput(11.59125,0.11515625){\tiny -2}
\usefont{T1}{ptm}{m}{n}
\rput(7.146719,-0.45984375){$\longleftarrow$}
\psline[linewidth=0.04cm](4.3653126,0.71015626)(4.3828177,-1.1786673)
\psline[linewidth=0.04cm](6.084834,0.7078031)(6.1053123,-1.1898438)
\psline[linewidth=0.04cm](4.2453127,0.6301563)(6.1853123,0.61015624)
\psline[linewidth=0.04cm](4.2453127,-1.0698438)(6.2053127,-1.1098437)
\psbezier[linewidth=0.04](0.8653125,0.5701563)(1.3253125,-0.44984376)(2.8053124,-0.12984376)(3.2053125,-0.16984375)
\psbezier[linewidth=0.04](1.6253124,-0.9898437)(0.9653125,-1.7098438)(5.9853125,-1.5698438)(5.6053123,-1.0098437)
\usefont{T1}{ptm}{m}{n}
\rput(2.2909374,-1.5848438){\scriptsize $E_7$}
\usefont{T1}{ptm}{m}{n}
\rput(5.8109374,-0.38484374){\scriptsize $E_1$}
\psbezier[linewidth=0.04,linestyle=dashed,dash=0.16cm 0.16cm](8.305312,-0.94984376)(8.87762,-0.82984376)(8.9453125,-0.84984374)(8.829927,-1.4298438)
\psline[linewidth=0.04cm,linestyle=dashed,dash=0.16cm 0.16cm](13.105312,0.73015624)(13.145312,-1.3498437)
\psline[linewidth=0.04cm](14.825313,0.6901562)(14.885312,-0.84984374)
\psline[linewidth=0.04cm](13.005313,0.6301563)(14.885312,0.61015624)
\psline[linewidth=0.04cm](12.985312,-1.1498437)(14.485312,-1.1698438)
\psbezier[linewidth=0.04](10.045313,0.37015626)(10.4453125,0.37015626)(10.565312,0.39015624)(10.585313,0.07015625)(10.605312,-0.24984375)(10.685312,-0.6898438)(10.825313,-0.58984375)(10.965313,-0.48984376)(10.885312,0.05015625)(11.225312,-0.00984375)
\psline[linewidth=0.04cm,linestyle=dashed,dash=0.16cm 0.16cm](9.725312,0.79015625)(10.265312,0.29015625)
\psline[linewidth=0.04cm,linestyle=dashed,dash=0.16cm 0.16cm](13.405313,-0.50984377)(13.425312,-1.2898438)
\psline[linewidth=0.04cm,linestyle=dashed,dash=0.16cm 0.16cm](13.965313,-0.9898437)(14.005313,-1.7498437)
\psbezier[linewidth=0.04](9.885312,-1.2298437)(9.905313,-1.9898437)(13.285313,-1.5698438)(14.105312,-1.6298437)
\psline[linewidth=0.04cm](14.345312,-0.64984375)(14.365313,-1.2898438)
\psline[linewidth=0.04cm,linestyle=dashed,dash=0.16cm 0.16cm](14.245313,-0.72984374)(15.005313,-0.7498438)
\usefont{T1}{ptm}{m}{n}
\rput(13.740937,-0.7448437){\scriptsize $G_{12}$}
\usefont{T1}{ptm}{m}{n}
\rput(15.300938,0.09515625){\scriptsize $G_{13}$}
\usefont{T1}{ptm}{m}{n}
\rput(13.27125,0.21515626){\tiny -2}
\usefont{T1}{ptm}{m}{n}
\rput(14.708593,0.09515625){\tiny -5}
\usefont{T1}{ptm}{m}{n}
\rput(15.110937,-0.42484376){\scriptsize $E_1$}
\usefont{T1}{ptm}{m}{n}
\rput(10.790937,-1.8448437){\scriptsize $E_7$}
\end{pspicture}
}
\end{figure}

\noindent
$E_2=[4]$, $E_1,L_3=[5,2]$, $L_2,G_9=[5,2]$, $M,E_4,E_8;L=[4,2,4;3]$, $E_6,E_7,F,G_1,N,G_6=[3,2,6,2,4,2]$, $K_X^2=-13+1+2+2+6+4=2$,

\begin{eqnarray*}
K_Z &\equiv & \frac{-F}{2}+\frac{-L}{2}+\frac{-M}{2}+\frac{-E_4}{2}+\frac{-E_6}{2}+\frac{-E_8}{2}+\frac{1}{2}G_2+\frac{1}{2}G_3\\
    & &+\frac{1}{2}G_4+\frac{1}{2}G_5+\frac{1}{2}G_6+G_7+G_9+2G_{10}+G_{11}+G_{12}+G_{13}
\end{eqnarray*}

\begin{eqnarray*}
f^*(K_X) &\equiv & \frac{15}{34}F+\frac{1}{2}L+\frac{1}{4}M+\frac{3}{34}E_6+\frac{1}{4}E_8+\frac{15}{17}G_1+\frac{1}{2}G_2+\frac{1}{2}G_3\\
   & & +\frac{1}{2}G_4+\frac{1}{2}G_5+\frac{31}{34}G_6+G_7+\frac{4}{3}G_9+2G_{10}+G_{11}+G_{12}\\
   & & +G_{13}+\frac{2}{3}E_1+\frac{1}{2}E_2+\frac{2}{3}L_2+\frac{1}{3}L_3+\frac{13}{17}E_7+\frac{14}{17}N
\end{eqnarray*}

Therefore we have $\Q$-Gorenstein smoothable KSBA surfaces with one singularity $[4,2,4;3]$, $p_g=0$, and $K^2=2$. These surfaces are rational via \cite{Urz13}. We notice that $X$ itself is not KSBA since $G_{10}$ is a zero curve and the only one. Thus the corresponding KSBA surface is the contraction of $G_{10}$.

We prove that the fundamental group is trivial as before. Using the curve $G_{13}$ we get that loops around $E_2$, $E_1$ and $L_3$ are trivial. Then using $G_{10}$ we get that loops around $G_9$ and $L_2$ are trivial. Then using $G_4$ we have that loops around $M$ are trivial, and using Mumford's relations we get that loops around $L$ are trivial. Then using $G_8$ we get that loops around $E_6$ are trivial, so loops around $E_7,F,G_1,N,G_6$ are also trivial.  Then using $G_7$ we get that loops around $E_8$ are trivial and finally, using Mumford's relations once again we have that loops around $E_4$ are trivial. Hence the fundamental group of the complement of the exceptional configurations in $Z$ is trivial.

%------------------------------------------------------------------------------------------------------------------
\subsection{$[3,3,3;4]$} \label{[3,3,3;4]'}

Let $L_1,L_2,L_3,L$ be general lines in $\P^2$, and let $C$ be a conic tangent to $L_1$ at $L_1 \cap L_2$ and general everywhere else. Consider the pencil
$$\Gamma_{\lambda,\mu}=\{\lambda L_1L_2L_3+\mu CL=0\}$$ with $[\lambda:\mu]\in \P^1$. Let $Y\to \P^1$ be the elliptic fibration obtained by blowing up $\P^2$ at the base points. Note that there is one $I_5$ singular fiber corresponding to the triangle $L_1L_2L_3$. There is also one $I_2$ singular fiber which consist of the proper transforms of $C$ and $L$. We also get four nodal singular fibers. Let $Z\to Y$ be the blow-up on $9$ points of $Y$ as shown in the picture below. Relevant curves are the sections $E_4$, $E_7$ and $E_9$, the chosen nodal fiber $F$, and the exceptional curves $G_1, \ldots, G_{9}$ of $Z \to Y$, whose subindices follow as always the order of the blow-ups. Again, the computations below are as we did in the previous section, so we omit details.

\begin{figure}[htbp]
\scalebox{0.9} % Change this value to rescale the drawing.
{
\begin{pspicture}(0,-1.5901562)(13.745937,1.5701562)
\psline[linewidth=0.04cm](0.46,-0.7498438)(1.8,-0.7498438)
\psline[linewidth=0.04cm](0.18,0.33015624)(0.54,-0.8098438)
\psline[linewidth=0.04cm](2.0,0.35015625)(1.72,-0.8098438)
\psline[linewidth=0.04cm](0.14,0.25015625)(1.16,0.99015623)
\psline[linewidth=0.04cm](1.02,0.99015623)(2.04,0.25015625)
\psline[linewidth=0.04cm](5.46,1.4701562)(5.46,-1.0498438)
\rput{-88.63011}(4.8230014,5.6600285){\psarc[linewidth=0.04](5.31,0.36015624){0.47}{0.0}{180.0}}
\psbezier[linewidth=0.04](3.5142858,1.3901563)(3.2628572,0.23015624)(3.96,-0.32984376)(3.8495238,0.27015626)(3.7390475,0.8701562)(3.44,-0.48984376)(3.48,-1.2898438)
\psbezier[linewidth=0.04](1.84,0.05015625)(2.26,0.07015625)(2.5,0.65015626)(3.02,1.0301563)(3.54,1.4101562)(4.98,1.2701563)(5.6,1.2301563)
\psbezier[linewidth=0.04](1.76,-0.30984375)(2.34,-0.26984376)(4.9,-0.30984375)(5.62,-0.30984375)
\psbezier[linewidth=0.04](0.52,-0.42984375)(0.1,-0.48984376)(0.0,-1.0298438)(0.94,-1.1498437)(1.88,-1.2698437)(3.88,-1.3098438)(5.58,-0.72984374)
\usefont{T1}{ptm}{m}{n}
\rput(5.315625,0.33515626){\scriptsize $C$}
\usefont{T1}{ptm}{m}{n}
\rput(5.885625,0.43515626){\scriptsize $L$}
\usefont{T1}{ptm}{m}{n}
\rput(3.745625,0.53515625){\scriptsize $F$}
\usefont{T1}{ptm}{m}{n}
\rput(1.105625,-0.58484375){\scriptsize $L_1$}
\usefont{T1}{ptm}{m}{n}
\rput(1.625625,-0.14484376){\scriptsize $L_3$}
\usefont{T1}{ptm}{m}{n}
\rput(1.505625,0.37515625){\scriptsize $L_2$}
\usefont{T1}{ptm}{m}{n}
\rput(0.725625,0.41515625){\scriptsize $E_2$}
\usefont{T1}{ptm}{m}{n}
\rput(0.585625,-0.18484375){\scriptsize $E_3$}
\usefont{T1}{ptm}{m}{n}
\rput(2.125625,-1.0448438){\scriptsize $E_4$}
\usefont{T1}{ptm}{m}{n}
\rput(2.825625,0.47515625){\scriptsize $E_7$}
\usefont{T1}{ptm}{m}{n}
\rput(2.645625,-0.46484375){\scriptsize $E_9$}
\psline[linewidth=0.04cm](8.0,-0.76984376)(9.08,-0.76984376)
\psline[linewidth=0.04cm](7.7,0.41015625)(8.08,-0.8098438)
\psline[linewidth=0.04cm](9.52,0.43015626)(9.28,-0.64984375)
\psline[linewidth=0.04cm,linestyle=dashed,dash=0.16cm 0.16cm](7.6,0.23015624)(8.78,1.1101563)
\psline[linewidth=0.04cm](8.54,1.0701562)(9.56,0.33015624)
\psline[linewidth=0.04cm](12.98,1.5501562)(12.98,-1.0498438)
\psbezier[linewidth=0.04](10.16,0.19015625)(10.18,0.59015626)(9.94,1.1101563)(10.54,1.3301562)(11.14,1.5501562)(12.5,1.4501562)(13.1,1.4301562)
\psbezier[linewidth=0.04](9.92,-0.56984377)(9.62,-0.82984376)(10.66,-0.84984374)(11.26,-0.88984376)
\psbezier[linewidth=0.04](8.04,-0.34984374)(7.62,-0.40984374)(7.52,-1.0098437)(8.46,-1.1698438)(9.4,-1.3298438)(10.88,-1.2698437)(12.62,-1.2098438)
\usefont{T1}{ptm}{m}{n}
\rput(12.835625,0.6951563){\scriptsize $C$}
\usefont{T1}{ptm}{m}{n}
\rput(13.285625,0.71515626){\scriptsize $L$}
\usefont{T1}{ptm}{m}{n}
\rput(11.265625,0.37515625){\scriptsize $F$}
\usefont{T1}{ptm}{m}{n}
\rput(9.825625,-1.4048438){\scriptsize $E_4$}
\usefont{T1}{ptm}{m}{n}
\rput(10.365625,0.8151562){\scriptsize $E_7$}
\usefont{T1}{ptm}{m}{n}
\rput(10.325625,-0.98484373){\scriptsize $E_9$}
\usefont{T1}{ptm}{m}{n}
\rput(8.645625,-0.90484375){\scriptsize $L_1$}
\usefont{T1}{ptm}{m}{n}
\rput(9.645625,-0.00484375){\scriptsize $L_3$}
\usefont{T1}{ptm}{m}{n}
\rput(9.185625,0.85515624){\scriptsize $L_2$}
\usefont{T1}{ptm}{m}{n}
\rput(8.025625,0.85515624){\scriptsize $E_2$}
\usefont{T1}{ptm}{m}{n}
\rput(7.585625,-0.00484375){\scriptsize $E_3$}
\psline[linewidth=0.04cm](12.92,1.0101563)(13.42,0.31015626)
\psline[linewidth=0.04cm,linestyle=dashed,dash=0.16cm 0.16cm](13.42,0.43015626)(12.9,-0.04984375)
\psline[linewidth=0.04cm,linestyle=dashed,dash=0.16cm 0.16cm](13.06,-0.8698437)(12.4,-1.2698437)
\psline[linewidth=0.04cm,linestyle=dashed,dash=0.16cm 0.16cm](9.26,-0.36984375)(10.14,-0.72984374)
\psline[linewidth=0.04cm,linestyle=dashed,dash=0.16cm 0.16cm](9.42,0.25015625)(10.22,0.27015626)
\psbezier[linewidth=0.04,linestyle=dashed,dash=0.16cm 0.16cm](9.34,-0.60984373)(8.92,-0.26984376)(8.84,-0.52984375)(8.98,-0.84984374)
\psline[linewidth=0.04cm](11.4,1.1901562)(11.42,-1.3298438)
\psline[linewidth=0.04cm,linestyle=dashed,dash=0.16cm 0.16cm](10.9,1.5101563)(11.48,1.0101563)
\psline[linewidth=0.04cm,linestyle=dashed,dash=0.16cm 0.16cm](11.32,0.6301563)(11.88,0.13015625)
\psline[linewidth=0.04cm](11.86,0.27015626)(11.32,-0.26984376)
\psline[linewidth=0.04cm,linestyle=dashed,dash=0.16cm 0.16cm](10.94,-0.94984376)(11.52,-0.46984375)
\psbezier[linewidth=0.04](11.5,-0.90984374)(12.12,-0.94984376)(12.08,-0.44984376)(13.1,-0.50984377)
\usefont{T1}{ptm}{m}{n}
\rput(8.765625,-0.34484375){\scriptsize $G_9$}
\usefont{T1}{ptm}{m}{n}
\rput(9.785625,-0.38484374){\scriptsize $G_8$}
\usefont{T1}{ptm}{m}{n}
\rput(9.785625,0.43515626){\scriptsize $G_7$}
\usefont{T1}{ptm}{m}{n}
\rput(12.505625,-0.94484377){\scriptsize $G_6$}
\usefont{T1}{ptm}{m}{n}
\rput(13.325625,0.03515625){\scriptsize $G_5$}
\usefont{T1}{ptm}{m}{n}
\rput(11.025625,-0.54484373){\scriptsize $G_4$}
\usefont{T1}{ptm}{m}{n}
\rput(10.945625,1.1551563){\scriptsize $G_3$}
\usefont{T1}{ptm}{m}{n}
\rput(11.785625,0.49515626){\scriptsize $G_2$}
\usefont{T1}{ptm}{m}{n}
\rput(11.785625,-0.12484375){\scriptsize $G_1$}
\usefont{T1}{ptm}{m}{n}
\rput(11.265625,0.07515625){\tiny -7}
\usefont{T1}{ptm}{m}{n}
\rput(9.243281,-0.12484375){\tiny -5}
\usefont{T1}{ptm}{m}{n}
\rput(12.846719,0.33515626){\tiny -4}
\usefont{T1}{ptm}{m}{n}
\rput(13.121407,0.45515624){\tiny -3}
\usefont{T1}{ptm}{m}{n}
\rput(8.541407,-0.66484374){\tiny -3}
\usefont{T1}{ptm}{m}{n}
\rput(12.101406,-0.5648438){\tiny -3}
\usefont{T1}{ptm}{m}{n}
\rput(11.901406,1.3351562){\tiny -3}
\usefont{T1}{ptm}{m}{n}
\rput(8.185938,0.53515625){\tiny -2}
\usefont{T1}{ptm}{m}{n}
\rput(9.005938,0.53515625){\tiny -2}
\usefont{T1}{ptm}{m}{n}
\rput(8.025937,-0.22484376){\tiny -2}
\usefont{T1}{ptm}{m}{n}
\rput(10.765938,-1.3848437){\tiny -2}
\usefont{T1}{ptm}{m}{n}
\rput(11.565937,0.15515625){\tiny -2}
\usefont{T1}{ptm}{m}{n}
\rput(6.581406,-0.05984375){$\longleftarrow$}
\end{pspicture}
}
\end{figure}

\noindent
$L_3,L_2=[5,2]$, $G_1,F,E_4,E_3,L_1=[2,7,2,2,3]$,

\noindent
$L,E_7,E_9;C=[3,3,3;4]$, $K_X^2=-9+2+5+4=2$,

\begin{eqnarray*}
K_Z &\equiv & \frac{-F}{2}+\frac{-C}{2}+\frac{-L}{2}+\frac{1}{2}G_2+\frac{1}{2}G_3+\frac{1}{2}G_4\\
  & & +\frac{1}{2}G_6+G_7+G_8+G_9
\end{eqnarray*}

\begin{eqnarray*}
f^*(K_X) &\equiv & \frac{7}{18}F+\frac{1}{2}C+\frac{1}{6}L+\frac{4}{9}G_1+\frac{1}{2}G_2+\frac{1}{2}G_3\\
 & & +\frac{1}{2}G_4+\frac{1}{2}G_6+G_7+G_8+G_9+\frac{5}{9}L_1\\
 & & +\frac{1}{3}L_2+\frac{2}{3}L_3+\frac{6}{9}E_3+\frac{7}{9}E_4+\frac{2}{3}E_7+\frac{2}{3}E_9
\end{eqnarray*}

Therefore we have $\Q$-Gorenstein smoothable KSBA surfaces with one singularity $[3,3,3;4]$, $p_g=0$, and $K^2=2$. These surfaces are rational via \cite{Urz13}.

Let $\alpha_2$ be a loop around $E_7$. Using the curve $G_7$ we have that $\alpha_2$ is homotopic to a loop around $L_3$ which has order nine, then we get that $\alpha_2^9=1$. On the other hand, by Mumford's relations, we have that $\alpha_2^3=\gamma$, and then $1=\alpha_2^9=\gamma^3$. Using Mumford's relations again, we have that $\alpha_1^3=\gamma$ where $\alpha_1$ is a loop around $L$, and using the curve $G_5$ we get that $\alpha_1$ is homotopic to $\gamma$, then by combining these two facts we get that $\gamma$ is homotopic to $\gamma^3=1$, i.e. $\gamma=1$. From the construction of the elliptic fibration, one can see that if we take the line through $L_1\cap L_2$ and $L \cap L_3$, then after blowing up base points we get a section that intersects $E_2$, $F$, and $C$. By using this section we get that loops around $F$ are trivial since they are homotopic to $\gamma$. Then using the curves $G_2$ and $G_9$ have that loops around $G_1$ and $L_3$ are trivial, and so the fundamental group of the complement of the exceptional configurations in $Z$ is trivial. Therefore we are in the simply connected case again.

%---------------------------------------------------------------------------------------------------
\subsection{$[2,3,6;2]$}

Let $L_1,L_2,L_3$ be general lines in $\P^2$. Let $L$ be a general line passing through $L_1\cap L_3$, and let $C$ be a conic which is tangent to $L_1$ in $L_1\cap L_2$ and general everywhere else. Consider the pencil $$\Gamma_{\lambda,\mu}=\{\lambda L_1L_2L_3+\mu CL=0\}$$ with $[\lambda:\mu]\in \P^1$. Let $Y\to \P^1$ be the elliptic fibration obtained by blowing up $\P^2$ at the base points. Note that there is one $I_6$ singular fiber on this fibration corresponding to the triangle $L_1L_2L_3$. There is also one $I_2$ singular fiber which consists of the proper transforms of $C$ and $L$. We also get four nodal singular fibers. Let $Z\to Y$ be the blow-up on $11$ points of $Y$ as shown in the picture below. Relevant curves are the sections $E_3$, $E_5$ and $E_7$, the chosen nodal fibers $F_1$ and $F_2$, and the exceptional curves $G_1, \ldots, G_{11}$ of $Z \to Y$, whose subindices follow as always the order of the blow-ups.

Again, the computations below are as we did in the previous section, so we omit details.

\begin{figure}[htbp]
\scalebox{0.8} % Change this value to rescale the drawing.
{
\begin{pspicture}(0,-2.2201562)(15.445937,2.2001562)
\psline[linewidth=0.04cm](8.937635,0.7789219)(10.628832,0.7789219)
\psline[linewidth=0.04cm](9.19782,0.91453826)(8.547359,-0.306009)
\psline[linewidth=0.04cm,linestyle=dashed,dash=0.16cm 0.16cm](10.368649,0.91453826)(11.019109,-0.306009)
\psline[linewidth=0.04cm](8.547359,-0.17039263)(9.19782,-1.39094)
\psline[linewidth=0.04cm](11.019109,-0.17039263)(10.49874,-1.39094)
\psline[linewidth=0.04cm,linestyle=dashed,dash=0.16cm 0.16cm](8.937635,-1.2553235)(10.7589245,-1.2553235)
\psline[linewidth=0.04cm](10.68,-0.75984377)(11.694614,-0.75148827)
\psbezier[linewidth=0.04](8.54,-1.0998437)(7.98,-1.2960085)(8.742497,-1.8655972)(9.184811,-1.8927205)(9.627123,-1.9198438)(13.659978,-1.7571042)(14.232383,-1.8520355)
\psline[linewidth=0.04cm](14.518585,1.6061817)(14.518585,-1.6486111)
\rput{-89.926735}(14.275107,14.648171){\psarc[linewidth=0.04,linestyle=dashed,dash=0.16cm 0.16cm](14.471011,0.1773991){0.560153}{-11.053917}{190.84953}}
\psbezier[linewidth=0.04](10.056427,0.60262066)(10.108464,0.8060452)(9.920246,1.4110898)(10.6,1.5201563)(11.279754,1.6292228)(14.661687,2.0401542)(14.960898,1.4976887)(15.26011,0.9552232)(14.992935,0.7897439)(14.84,0.46015626)
\usefont{T1}{ptm}{m}{n}
\rput(10.905625,-1.0948437){\scriptsize $L_2$}
\usefont{T1}{ptm}{m}{n}
\rput(8.665625,0.42515624){\scriptsize $L_1$}
\usefont{T1}{ptm}{m}{n}
\rput(9.225625,-0.8748438){\scriptsize $E_2$}
\usefont{T1}{ptm}{m}{n}
\rput(10.145625,-2.0348437){\scriptsize $E_3$}
\usefont{T1}{ptm}{m}{n}
\rput(9.665625,0.96515626){\scriptsize $E_4$}
\usefont{T1}{ptm}{m}{n}
\rput(11.205625,1.8051562){\scriptsize $E_5$}
\usefont{T1}{ptm}{m}{n}
\rput(12.625625,-0.91484374){\scriptsize $E_7$}
\usefont{T1}{ptm}{m}{n}
\rput(14.355625,0.30515626){\scriptsize $C$}
\usefont{T1}{ptm}{m}{n}
\rput(11.685625,0.38515624){\scriptsize $F_1$}
\usefont{T1}{ptm}{m}{n}
\rput(12.965625,0.98515624){\scriptsize $F_2$}
\psline[linewidth=0.04cm,linestyle=dashed,dash=0.16cm 0.16cm](13.9721985,-1.8927205)(14.60965,-1.39094)
\psline[linewidth=0.04cm](11.94,1.8001562)(11.942762,-1.5943645)
\psline[linewidth=0.04cm](13.230674,1.524812)(13.243683,-1.540118)
\psline[linewidth=0.04cm,linestyle=dashed,dash=0.16cm 0.16cm](12.007809,-1.3631318)(11.513458,-1.9191588)
\psline[linewidth=0.04cm,linestyle=dashed,dash=0.16cm 0.16cm](13.288729,-1.4038167)(12.794379,-1.9055972)
\psline[linewidth=0.04cm,linestyle=dashed,dash=0.16cm 0.16cm](11.540449,-0.79998136)(12.073827,-0.31176245)
\psline[linewidth=0.04cm](12.034799,-0.7450499)(14.636641,-0.7450499)
\psline[linewidth=0.04cm,linestyle=dashed,dash=0.16cm 0.16cm](12.834379,1.7974144)(13.302711,1.2820723)
\psline[linewidth=0.04cm](11.877716,0.019470274)(12.5,0.0)
\psline[linewidth=0.04cm](12.463131,-0.021214636)(12.463131,0.6704288)
\psline[linewidth=0.04cm,linestyle=dashed,dash=0.16cm 0.16cm](12.521186,0.62974393)(11.8,0.64015627)
\rput{-92.10738}(13.392599,13.597094){\psarc[linewidth=0.04,linestyle=dashed,dash=0.16cm 0.16cm](13.249279,0.34412125){0.32430542}{327.089}{228.54854}}
\psline[linewidth=0.04cm](8.148148,-0.5494335)(8.499396,-1.1868305)
\psline[linewidth=0.04cm,linestyle=dashed,dash=0.16cm 0.16cm](8.14,-0.71984375)(8.76347,-0.35244736)
\usefont{T1}{ptm}{m}{n}
\rput(8.295625,-0.37484375){\scriptsize $G_{11}$}
\usefont{T1}{ptm}{m}{n}
\rput(8.095625,-0.95484376){\scriptsize $G_{10}$}
\usefont{T1}{ptm}{m}{n}
\rput(14.105625,-1.4948437){\scriptsize $G_9$}
\usefont{T1}{ptm}{m}{n}
\rput(12.805625,1.4451562){\scriptsize $G_8$}
\usefont{T1}{ptm}{m}{n}
\rput(12.785625,-1.5348438){\scriptsize $G_7$}
\usefont{T1}{ptm}{m}{n}
\rput(11.465625,-1.5348438){\scriptsize $G_6$}
\usefont{T1}{ptm}{m}{n}
\rput(11.565625,-0.47484374){\scriptsize $G_5$}
\usefont{T1}{ptm}{m}{n}
\rput(13.825625,0.40515625){\scriptsize $G_4$}
\usefont{T1}{ptm}{m}{n}
\rput(12.205625,0.80515623){\scriptsize $G_3$}
\usefont{T1}{ptm}{m}{n}
\rput(12.705625,0.30515626){\scriptsize $G_2$}
\usefont{T1}{ptm}{m}{n}
\rput(12.245625,-0.13484375){\scriptsize $G_1$}
\usefont{T1}{ptm}{m}{n}
\rput(11.784062,1.0651562){\tiny -8}
\usefont{T1}{ptm}{m}{n}
\rput(13.06625,-0.23484375){\tiny -6}
\usefont{T1}{ptm}{m}{n}
\rput(12.243281,-1.7148438){\tiny -5}
\usefont{T1}{ptm}{m}{n}
\rput(8.946719,-0.51484376){\tiny -4}
\usefont{T1}{ptm}{m}{n}
\rput(14.381406,-0.07484375){\tiny -3}
\usefont{T1}{ptm}{m}{n}
\rput(11.365937,1.4451562){\tiny -2}
\usefont{T1}{ptm}{m}{n}
\rput(9.765938,-1.1348437){\tiny -2}
\usefont{T1}{ptm}{m}{n}
\rput(12.185938,0.12515625){\tiny -2}
\usefont{T1}{ptm}{m}{n}
\rput(10.565937,0.16515625){\tiny -2}
\usefont{T1}{ptm}{m}{n}
\rput(9.685938,0.6251562){\tiny -2}
\usefont{T1}{ptm}{m}{n}
\rput(8.465938,-0.83484375){\tiny -2}
\usefont{T1}{ptm}{m}{n}
\rput(12.785937,-0.6148437){\tiny -2}
\usefont{T1}{ptm}{m}{n}
\rput(8.965938,0.20515625){\tiny -2}
\usefont{T1}{ptm}{m}{n}
\rput(12.325937,0.38515624){\tiny -2}
\usefont{T1}{ptm}{m}{n}
\rput(14.865937,0.06515625){\tiny -2}
\usefont{T1}{ptm}{m}{n}
\rput(10.665937,-0.6148437){\tiny -2}
\psline[linewidth=0.04cm](0.5776353,0.7589219)(2.2688322,0.7589219)
\psline[linewidth=0.04cm](0.83781946,0.8945383)(0.18735908,-0.326009)
\psline[linewidth=0.04cm](2.0086482,0.8945383)(2.6591084,-0.326009)
\psline[linewidth=0.04cm](0.18735908,-0.19039264)(0.83781946,-1.4109399)
\psline[linewidth=0.04cm](2.6591084,-0.19039264)(2.1387403,-1.4109399)
\psline[linewidth=0.04cm](0.5776353,-1.2753235)(2.3989244,-1.2753235)
\psbezier[linewidth=0.04](0.56,-0.69984376)(0.0,-1.2798438)(0.4424972,-1.7255973)(0.86,-1.7198437)(1.2775028,-1.7140903)(4.92,-1.8398438)(6.26,-1.5598438)
\psline[linewidth=0.04cm](6.158585,1.5861818)(6.158585,-1.668611)
\rput{-89.926735}(5.9457974,6.2682037){\psarc[linewidth=0.04](6.111011,0.1573991){0.560153}{-11.053917}{190.84953}}
\psbezier[linewidth=0.04](1.6964271,0.5826206)(1.748464,0.7860452)(1.5602465,1.3910898)(2.24,1.5001563)(2.9197536,1.6092228)(6.34,2.1801562)(6.6008983,1.4776887)(6.8617964,0.77522105)(6.672935,0.7297439)(6.48,0.42015624)
\psline[linewidth=0.04cm](2.28,-0.7998437)(6.3,-0.7998437)
\psbezier[linewidth=0.04](3.44,1.8401562)(3.26,-0.21984375)(3.98,-0.41984376)(3.92,0.12015625)(3.86,0.66015625)(3.3,-0.41984376)(3.36,-1.9398438)
\psbezier[linewidth=0.04](4.7,1.9601562)(4.54,-0.13984375)(5.26,-0.39984375)(5.2,0.14015625)(5.14,0.68015623)(4.58,-0.29984376)(4.66,-1.7998438)
\usefont{T1}{ptm}{m}{n}
\rput(15.185625,0.20515625){\scriptsize $L$}
\usefont{T1}{ptm}{m}{n}
\rput(10.925625,0.38515624){\scriptsize $L_3$}
\usefont{T1}{ptm}{m}{n}
\rput(9.825625,-1.4348438){\scriptsize $E_1$}
\usefont{T1}{ptm}{m}{n}
\rput(0.765625,0.26515624){\scriptsize $L_1$}
\usefont{T1}{ptm}{m}{n}
\rput(2.065625,0.24515624){\scriptsize $L_3$}
\usefont{T1}{ptm}{m}{n}
\rput(2.165625,-0.63484377){\scriptsize $L_2$}
\usefont{T1}{ptm}{m}{n}
\rput(6.765625,0.14515625){\scriptsize $L$}
\usefont{T1}{ptm}{m}{n}
\rput(5.995625,0.14515625){\scriptsize $C$}
\usefont{T1}{ptm}{m}{n}
\rput(3.865625,0.42515624){\scriptsize $F_1$}
\usefont{T1}{ptm}{m}{n}
\rput(5.125625,0.48515624){\scriptsize $F_2$}
\usefont{T1}{ptm}{m}{n}
\rput(1.445625,-1.0948437){\scriptsize $E_1$}
\usefont{T1}{ptm}{m}{n}
\rput(0.705625,-0.59484375){\scriptsize $E_2$}
\usefont{T1}{ptm}{m}{n}
\rput(1.845625,-1.8948437){\scriptsize $E_3$}
\usefont{T1}{ptm}{m}{n}
\rput(2.705625,1.3651563){\scriptsize $E_5$}
\usefont{T1}{ptm}{m}{n}
\rput(4.085625,-0.95484376){\scriptsize $E_7$}
\usefont{T1}{ptm}{m}{n}
\rput(7.661406,-0.10984375){$\longleftarrow$}
\usefont{T1}{ptm}{m}{n}
\rput(1.405625,0.54515624){\scriptsize $E_4$}
\end{pspicture}
}
\end{figure}

$E_3,G_{10}=[5,2]$, $G_2,G_1,F_1,E_5,E_4,L_1,E_2=[2,2,8,2,2,2,4]$,

$L_2,C,F_2;E_7=[2,3,6;2]$, $K_X^2=-11+2+7+4=2$,

\begin{eqnarray*}
K_Z &\equiv & \frac{-F_1}{2}+\frac{-F_2}{2}+\frac{1}{2}G_2+G_3+\frac{1}{2}G_5+\frac{1}{2}G_6\\
 & & +\frac{1}{2}G_7+\frac{1}{2}G_8+G_9+G_{10}+2G_{11}
\end{eqnarray*}

\begin{eqnarray*}
f^*(K_X) &\equiv & \frac{7}{16}F_1+\frac{2}{6}F_2+\frac{10}{16}G_1+\frac{13}{16}G_2+G_3+\frac{1}{2}G_5+\frac{1}{2}G_6\\
& & +\frac{1}{2}G_7+\frac{1}{2}G_8+G_9+\frac{4}{3}G_{10}+2G_{11}+\frac{11}{16}E_2+\frac{2}{3}E_3\\
& & +\frac{13}{16}E_4+\frac{14}{16}E_5+E_7+\frac{12}{16}L_1+\frac{1}{2}L_2+\frac{2}{3}C
\end{eqnarray*}

Therefore we have $\Q$-Gorenstein smoothable KSBA surfaces with one singularity $[2,3,6;2]$, $p_g=0$, and $K^2=2$. These surfaces are rational via \cite{Urz13}.

Using the curve $G_{11}$ we get that loops around $E_2$ and $G_{10}$ are homotopic and since the orders of the fundamental groups of the links are coprime we get that both loops are trivial. Then using $G_5,G_8$ and $G_9$ we get that loops around $E_7,F_2$ and $C$ are trivial. Finally using Mumford's relations we have that $1=\alpha_1 \alpha_2 \alpha_3 \gamma^{-2}$ where $\alpha_1,\alpha_2,\alpha_3$ and $\gamma$ are loops around $L_2,C,F_2$ and $E_7$ respectively. Since we have already shown that $\alpha_2,\alpha_3$ and $\gamma$ are trivial, we conclude that $\alpha_1$ is also trivial. Thus the fundamental group of the complement of the exceptional divisors in $Z$ is trivial.

%--------------------------------------------------------------------------------------------------------------------------------------------------
\section{Overview of Wahl singularities in these moduli spaces} \label{s3}

In the following tables we list new and old Wahl singularities that appear on normal KSBA surfaces with no-local-to-global obstructions to deform for the invariants $\pi_1=1$, $p_g=0$, and $K^2=1,2,3,4$. This is a list with the known (to us) examples, we give one reference for each. We remark that $K^2=4$ is the maximum allowed $K^2$ for the Lee-Park type of construction (see e.g. \cite{PSU13}).

Each table shows:
\begin{itemize}
\item Value of $n$ and $a$ corresponding to $\frac{1}{n^2}(1,na-1)$.
\item The associated Hirzebruch-Jung continued fraction $$\frac{n^2}{na-1}=e_1 - \frac{1}{e_2 - \frac{1}{\ddots - \frac{1}{e_s}}}=[e_1,\ldots,e_s].$$
\item Type of the minimal model of the resolution of the KSBA surface with that one singularity (TMMR). Here we have the following types: rational (Rat), Dolgachev surface of type $(2,3)$ (Dol(2,3)), general type with $K^2=k$ (GenTypek). To know the type, we apply the explicit MMP in \cite{HTU13} (see \cite{Urz13}).
\item In the last column, we put one reference where it was constructed (there may be more references).
\end{itemize}

\bigskip
\textbf{Wahl singularities for $K^2=1$:}

\begin{longtable}{|c|c|c|c|}
\hline

$\mathbf{\binom{n}{a}}$ & {\bf Chain} & {\bf TMMR} & {\bf Reference} \\ [0.05cm]

\hline

$\binom{2}{1}$ & $[4]$ & Dol(2,3) & \cite{LP07} \\ [0.05cm]

$\binom{3}{1}$ & $[5,2]$ & Dol(2,3) & \cite{S13} \\ [0.05cm]

$\binom{3}{1}$ & $[5,2]$ & Rat & \cite{PN} \\ [0.05cm]

$\binom{4}{1}$ & $[6,2,2]$ & Rat & \cite{Urz13} \\ [0.05cm]

$\binom{5}{1}$ & $[7,2,2,2]$ & Rat & \cite{Urz13} \\ [0.05cm]

$\binom{5}{2}$ & $[3,5,2]$ & Rat & \cite{Urz13} \\ [0.05cm]

$\binom{6}{1}$ & $[8,2,2,2,2]$ & Rat & \cite{LP07} \\ [0.05cm]

$\binom{7}{1}$ & $[9,2,2,2,2,2]$ & Rat & \cite{PSlist} \\ [0.05cm]

$\binom{7}{2}$ & $[4,5,2,2]$ & Rat & \cite{PN} \\ [0.05cm]

$\binom{7}{3}$ & $[2,6,2,3]$ & Rat & \cite{PSU13} \\ [0.05cm]

$\binom{8}{3}$ & $[3,5,3,2]$ & Rat & \cite{PSU13} \\ [0.05cm]

$\binom{9}{2}$ & $[5,5,2,2,2]$ & Rat & \cite{PN} \\ [0.05cm]

$\binom{9}{4}$ & $[2,7,2,2,3]$ & Rat & \cite{PSlist} \\ [0.05cm]

$\binom{10}{3}$ & $[2,2,6,2,4]$ & Rat & \cite{S13} \\ [0.05cm]

$\binom{11}{2}$ & $[6,5,2,2,2,2]$ & Rat & \cite{S13} \\ [0.05cm]

$\binom{11}{3}$ & $[4,5,3,2,2]$ & Rat & \cite{LN12} \\ [0.05cm]

$\binom{11}{4}$ & $[3,6,2,3,2]$ & Rat & \cite{S13} \\ [0.05cm]

$\binom{11}{5}$ & $[2,8,2,2,2,3]$ & Rat & \cite{PSlist} \\ [0.05cm]

$\binom{12}{5}$ & $[2,4,5,2,3]$ & Rat & \cite{S13} \\ [0.05cm]

$\binom{13}{3}$ & $[2,2,2,6,2,5]$ & Rat & \cite{PN} \\ [0.05cm]

$\binom{13}{4}$ & $[2,2,7,2,2,4]$ & Rat & \cite{S13} \\ [0.05cm]

$\binom{13}{5}$ & $[3,3,5,3,2]$ & Rat & \cite{S13} \\ [0.05cm]

$\binom{14}{3}$ & $[5,5,3,2,2,2]$ & Rat & \cite{S13} \\ [0.05cm]

$\binom{16}{5}$ & $[2,2,8,2,2,2,4]$ & Rat & \cite{PN} \\ [0.05cm]

$\binom{16}{7}$ & $[2,5,5,2,2,3]$ & Rat & \cite{S13} \\ [0.05cm]

$\binom{17}{5}$ & $[4,2,5,4,2,2]$ & Rat & \cite{PN} \\ [0.05cm]

$\binom{17}{7}$ & $[2,4,2,6,2,3]$ & Rat & \cite{S13} \\ [0.05cm]

$\binom{18}{5}$ & $[4,3,5,3,2,2]$ & Rat & \cite{S13} \\ [0.05cm]

$\binom{19}{8}$ & $[2,5,5,2,2,3]$ & Rat & \cite{S13} \\ [0.05cm]

$\binom{21}{5}$ & $[2,2,2,8,2,2,2,5]$ & Rat & \cite{S13} \\ [0.05cm]

$\binom{23}{10}$ & $[3,2,2,6,2,5,2]$ & Rat & \cite{S13} \\ [0.05cm]

$\binom{24}{5}$ & $[5,7,2,2,3,2,2,2]$ & Rat & \cite{PSU13} \\ [0.05cm]

\hline

\end{longtable}

%\caption{Wahl singularities for $K^2=1$}
%\label{ws1}

Note that the singularity in $\binom{3}{1}$ appears twice in the table above but in two different ways. This is the only singularity known so far in these moduli spaces with this property. This situation is opposite to the one for $\frac{1}{4}(1,1)$, where the TMMR must be a Dol(2,3) (see e.g. \cite{Urz13}). %We also notice that the biggest index in this table is $n=24$, which appears in \cite{PSU13}.

\bigskip
\textbf{Wahl singularities for $K^2=2$:}

\begin{longtable}{|c|c|c|c|}

\hline

$\mathbf{\binom{n}{a}}$ & {\bf Chain} & {\bf TMMR} & {\bf Reference}\\ [0.05cm]

\hline

$\binom{2}{1}$ & $[4]$ & GenType1 & \cite{LP07} \\ [0.05cm]

$\binom{3}{1}$ & $[5,2]$ & Dol(2,3) & \cite{LP07} \\ [0.05cm]

$\binom{4}{1}$ & $[6,2,2]$ & Dol(2,3) & \cite{PPS09} \\ [0.05cm]

$\binom{5}{1}$ & $[7,2,2,2]$ & Rat & \cite{LP07} \\ [0.05cm]

$\binom{5}{2}$ & $[3,5,2]$ & Dol(2,3) & \cite{PSlist} \\ [0.05cm]

$\binom{6}{1}$ & $[8,2,2,2,2]$ & Rat & \cite{PSlist} \\ [0.05cm]

$\binom{7}{1}$ & $[9,2,2,2,2,2]$ & Rat & \cite{S13} \\ [0.05cm]

$\binom{7}{2}$ & $[4,5,2,2]$ & Rat & \cite{PSlist} \\ [0.05cm]

$\binom{7}{3}$ & $[2,6,2,3]$ & Rat & \cite{S13} \\ [0.05cm]

$\binom{8}{3}$ & $[3,5,3,2]$ & Rat & \cite{S13} \\ [0.05cm]

$\binom{9}{4}$ & $[2,7,2,2,3]$ & Rat & \cite{LP07} \\ [0.05cm]

$\binom{10}{1}$ & $[12,2,2,2,2,2,2,2,2]$ & Rat & \cite{PN} \\ [0.05cm]

$\binom{10}{3}$ & $[4,2,6,2,2]$ & Rat & \cite{S13} \\ [0.05cm]

$\binom{11}{2}$ & $[6,5,2,2,2,2]$ & Rat & \cite{S13} \\ [0.05cm]

$\binom{11}{3}$ & $[4,5,3,2,2]$ & Rat & \cite{S13} \\ [0.05cm]

$\binom{11}{4}$ & $[3,6,2,3,2]$ & Rat & \cite{PSlist} \\ [0.05cm]

$\binom{11}{5}$ & $[2,8,2,2,2,3]$ & Rat & \cite{PN} \\ [0.05cm]

$\binom{12}{5}$ & $[2,4,5,2,3]$ & Rat & \cite{S13} \\ [0.05cm]

$\binom{13}{3}$ & $[5,2,6,2,2,2]$ & Rat & \cite{S13} \\ [0.05cm]

$\binom{13}{4}$ & $[2,2,7,2,2,4]$ & Rat & \cite{PSlist} \\ [0.05cm]

$\binom{13}{5}$ & $[3,3,5,3,2]$ & Rat & \cite{S13} \\ [0.05cm]

$\binom{13}{6}$ & $[2,9,2,2,2,2,3]$ & Rat & \cite{PN} \\ [0.05cm]

$\binom{14}{3}$ & $[5,5,3,2,2,2]$ & Rat & \cite{S13} \\ [0.05cm]

$\binom{14}{5}$ & $[2,4,5,2,3]$ & Rat & \cite{S13} \\ [0.05cm]

$\binom{15}{4}$ & $[4,6,2,3,2,2]$ & Rat & \cite{S13} \\ [0.05cm]

$\binom{15}{7}$ & $[2,10,2,2,2,2,2,3]$ & Rat & \cite{LP07} \\ [0.05cm]

$\binom{16}{5}$ & $[2,2,8,2,2,2,4]$ & Rat & \cite{PSlist} \\ [0.05cm]

$\binom{17}{4}$ & $[2,2,2,7,2,2,5]$ & Rat & \cite{S13} \\ [0.05cm]

$\binom{17}{6}$ & $[3,8,2,2,2,3,2]$ & Rat & \cite{S13} \\ [0.05cm]

$\binom{17}{7}$ & $[2,4,2,6,2,3]$ & Rat & \cite{S13} \\ [0.05cm]

$\binom{17}{8}$ & $[2,11,2,2,2,2,2,2,3]$ & Rat & \cite{PSlist} \\ [0.05cm]

$\binom{18}{7}$ & $[2,3,6,2,3,3]$ & Rat & \cite{S13} \\ [0.05cm]

$\binom{19}{5}$ & $[4,7,2,2,3,2,2]$ & Rat & \cite{PSlist} \\ [0.05cm]

$\binom{19}{6}$ & $[2,2,9,2,2,2,2,4]$ & Rat & \cite{PSlist} \\ [0.05cm]

$\binom{19}{7}$ & $[3,4,5,2,3,2]$ & Rat & \cite{S13} \\ [0.05cm]

$\binom{19}{8}$ & $[2,4,5,3,2,3]$ & Rat & \cite{S13} \\ [0.05cm]

$\binom{20}{9}$ & $[2,6,5,2,2,2,3]$ & Rat & \cite{S13} \\ [0.05cm]

$\binom{21}{8}$ & $[3,3,5,3,3,2]$ & Rat & \cite{S13} \\ [0.05cm]

$\binom{22}{7}$ & $[2,2,10,2,2,2,2,2,4]$ & Rat & \cite{PSlist} \\ [0.05cm]

$\binom{23}{4}$ & $[6,6,2,3,2,2,2,2]$ & Rat & \cite{LN12} \\ [0.05cm]

$\binom{23}{8}$ & $[3,10,2,2,2,2,2,3,2]$ & Rat & \cite{PSlist} \\ [0.05cm]

$\binom{24}{5}$ & $[5,7,2,2,3,2,2,2]$ & Rat & \cite{PSU13} \\ [0.05cm]

$\binom{24}{11}$ & $[2,7,5,2,2,2,2,3]$ & Rat & \cite{S13} \\ [0.05cm]

$\binom{25}{8}$ & $[2,2,11,2,2,2,2,2,2,4]$ & Rat & \cite{LP07} \\ [0.05cm]

$\binom{25}{9}$ & $[3,5,5,2,2,3,2]$ & Rat & \cite{S13} \\ [0.05cm]

$\binom{27}{10}$ & $[3,4,2,6,2,3,2]$ & Rat & \cite{S13} \\ [0.05cm]

$\binom{29}{4}$ & $[8,2,2,7,2,2,2,2,2,2]$ & Rat & \cite{S13} \\ [0.05cm]

$\binom{29}{13}$ & $[2,6,2,6,2,2,2,3]$ & Rat & \cite{S13} \\ [0.05cm]

$\binom{30}{11}$ & $[3,4,5,3,2,3,2]$ & Rat & \cite{Urz13} \\ [0.05cm]

$\binom{31}{13}$ & $[2,4,3,5,3,2,3]$ & Rat & \cite{S13} \\ [0.05cm]

$\binom{34}{9}$ & $[4,5,5,2,2,3,2,2]$ & Rat & \cite{S13} \\ [0.05cm]

$\binom{35}{11}$ & $[2,2,7,5,2,2,2,2,4]$ & Rat & \cite{S13} \\ [0.05cm]

$\binom{37}{17}$ & $[2,7,5,3,2,2,2,2,3]$ & Rat & \cite{PN} \\ [0.05cm]

$\binom{41}{19}$ & $[2,8,2,6,2,2,2,2,2,3]$ & Rat & \cite{S13} \\ [0.05cm]

$\binom{42}{13}$ & $[2,2,6,2,6,2,2,2,4]$ & Rat & \cite{S13} \\ [0.05cm]

$\binom{42}{19}$ & $[2,6,6,2,3,2,2,2,3]$ & Rat & \cite{S13} \\ [0.05cm]

$\binom{44}{17}$ & $[2,3,4,2,6,2,3,3]$ & Rat & \cite{S13} \\ [0.05cm]

$\binom{46}{21}$ & $[2,7,2,2,7,2,2,2,2,3]$ & Rat & \cite{S13} \\ [0.05cm]

$\binom{49}{8}$ & $[2,2,2,2,2,11,2,2,2,2,2,2,7]$ & Rat & \cite{S13} \\ [0.05cm]

$\binom{55}{9}$ & $[2,2,2,2,2,12,2,2,2,2,2,2,2,7]$ & Rat & \cite{S13} \\ [0.05cm]

$\binom{58}{13}$ & $[5,2,9,2,2,2,2,4,2,2,2]$ & Rat & \cite{S13} \\ [0.05cm]

\hline

\end{longtable}

%\caption{Wahl singularities for $K^2=2$}
%\label{ws2}

For this case, the largest index is $n=58$. The chain corresponding to the exceptional divisor of the minimal resolution of this singularity was obtained by blowing up nine times the elliptic fibration $I_4+8I_1$ with one double section as shown in Figure \ref{k2}.

\begin{figure}[htbp]
\scalebox{1} % Change this value to rescale the drawing.
{
\begin{pspicture}(0,-2.0705879)(7.445918,2.0905879)
\psline[linewidth=0.04cm](0.24721007,1.4967585)(0.24721007,-1.5222596)
\psline[linewidth=0.04cm](0.0,1.2451737)(2.7193108,1.2451737)
\psline[linewidth=0.04cm,linestyle=dashed,dash=0.16cm 0.16cm](2.4721007,1.4967585)(2.4721007,-1.5222596)
\psline[linewidth=0.04cm](0.0,-1.2706747)(2.7193108,-1.2706747)
\psbezier[linewidth=0.04](1.8046335,1.0439057)(1.7304705,1.6728679)(2.0024016,1.7231848)(2.6204267,1.7483433)(3.238452,1.7735019)(5.488064,1.7231848)(6.7735558,1.7483433)
\usefont{T1}{ptm}{m}{n}
\rput(0.47557616,0.5746112){\scriptsize $L_3$}
\usefont{T1}{ptm}{m}{n}
\rput(2.9555762,1.9346112){\scriptsize $E_2$}
\usefont{T1}{ptm}{m}{n}
\rput(6.205576,-0.36538878){\scriptsize $F_2$}
\usefont{T1}{ptm}{m}{n}
\rput(1.8555762,-0.4453888){\scriptsize $C$}
\usefont{T1}{ptm}{m}{n}
\rput(3.525576,-1.6853888){\scriptsize $S$}
\usefont{T1}{ptm}{m}{n}
\rput(1.1155761,1.4346112){\scriptsize $L_2$}
\usefont{T1}{ptm}{m}{n}
\rput(1.2755761,-1.4053888){\scriptsize $L_1$}
\usefont{T1}{ptm}{m}{n}
\rput(4.445576,-1.0653888){\scriptsize $F_1$}
\usefont{T1}{ptm}{m}{n}
\rput(2.7155762,0.5946112){\scriptsize $E_1$}
\psline[linewidth=0.04cm](4.2025714,1.2451737)(4.227292,-2.050588)
\psline[linewidth=0.04cm](6.4274616,1.9496112)(6.476904,-2.050588)
\rput{-90.0947}(3.9708898,3.802475){\psarc[linewidth=0.04,linestyle=dashed,dash=0.16cm 0.16cm](3.8835423,-0.08092839){0.6498386}{13.378786}{180.79324}}
\rput{-90.0947}(6.2786493,6.0006185){\psarc[linewidth=0.04,linestyle=dashed,dash=0.16cm 0.16cm](6.134679,-0.13383092){0.65304023}{13.552043}{179.15216}}
\psbezier[linewidth=0.04](2.2990537,-1.0442485)(2.5215428,-0.9436145)(2.867637,-0.91845596)(3.2137308,-1.1951994)(3.559825,-1.4719427)(3.856477,-1.7990029)(4.227292,-1.8996369)(4.5981073,-2.0002708)(6.3532987,-1.9751123)(6.649951,-1.9499538)
\psbezier[linewidth=0.04](1.8293545,-1.4467841)(1.6563075,-0.7675051)(2.0631902,-0.4238442)(2.5957057,-0.23917691)(3.1282213,-0.05450963)(3.485662,0.037566412)(4.1036873,0.01240793)
\psline[linewidth=0.04cm,linestyle=dashed,dash=0.16cm 0.16cm](3.590383,1.8541358)(4.38,0.8696112)
\psline[linewidth=0.04cm,linestyle=dashed,dash=0.16cm 0.16cm](3.086521,0.10788338)(3.12,-1.3503888)
\psline[linewidth=0.04cm](5.5775056,1.1393813)(6.2402625,1.1405945)
\psline[linewidth=0.04cm](5.6920295,0.26397294)(5.655449,1.1944323)
\psline[linewidth=0.04cm](5.8142962,0.37340757)(5.0232906,0.36296707)
\psline[linewidth=0.04cm,linestyle=dashed,dash=0.16cm 0.16cm](6.08,1.2496113)(6.56,0.7296112)
\psbezier[linewidth=0.04](4.326176,-0.012750555)(4.9194803,-0.088226005)(4.5981073,0.7168455)(4.8700385,0.6665285)(5.1419697,0.61621153)(5.279419,-0.37816626)(5.3644586,-0.7926636)(5.449498,-1.2071608)(5.8341575,-0.4404448)(5.7847157,-1.3713087)
\psline[linewidth=0.04cm,linestyle=dashed,dash=0.16cm 0.16cm](5.5869474,-1.2455163)(6.674672,-1.2455163)
\usefont{T1}{ptm}{m}{n}
\rput(3.3455763,-0.5653888){\scriptsize $G_9$}
\usefont{T1}{ptm}{m}{n}
\rput(6.305815,0.15461121){\tiny -9}
\usefont{T1}{ptm}{m}{n}
\rput(4.083291,-0.3853888){\tiny -5}
\usefont{T1}{ptm}{m}{n}
\rput(3.5067284,0.1346112){\tiny -4}
\usefont{T1}{ptm}{m}{n}
\rput(4.7659764,1.5946112){\tiny -2}
\usefont{T1}{ptm}{m}{n}
\rput(3.6255763,1.3946112){\scriptsize $G_8$}
\usefont{T1}{ptm}{m}{n}
\rput(6.125576,-1.3653888){\scriptsize $G_7$}
\usefont{T1}{ptm}{m}{n}
\rput(6.7855763,0.7946112){\scriptsize $G_6$}
\usefont{T1}{ptm}{m}{n}
\rput(5.8255763,1.3146112){\scriptsize $G_5$}
\usefont{T1}{ptm}{m}{n}
\rput(5.405576,0.83461124){\scriptsize $G_4$}
\usefont{T1}{ptm}{m}{n}
\rput(5.3055763,0.5546112){\scriptsize $G_3$}
\usefont{T1}{ptm}{m}{n}
\rput(7.025576,-0.045388788){\scriptsize $G_2$}
\usefont{T1}{ptm}{m}{n}
\rput(4.705576,-0.3853888){\scriptsize $G_1$}
\usefont{T1}{ptm}{m}{n}
\rput(5.7859764,0.77461123){\tiny -2}
\usefont{T1}{ptm}{m}{n}
\rput(0.38597655,0.1346112){\tiny -2}
\usefont{T1}{ptm}{m}{n}
\rput(5.4659767,0.23461121){\tiny -2}
\usefont{T1}{ptm}{m}{n}
\rput(5.1259766,-1.8453888){\tiny -2}
\usefont{T1}{ptm}{m}{n}
\rput(5.9659767,1.0146112){\tiny -2}
\usefont{T1}{ptm}{m}{n}
\rput(2.3059766,0.1946112){\tiny -2}
\usefont{T1}{ptm}{m}{n}
\rput(1.3059765,1.0746112){\tiny -2}
\usefont{T1}{ptm}{m}{n}
\rput(1.1459765,-1.1653888){\tiny -2}
\end{pspicture}
}
\caption{Largest index for $K^2=2$}
\label{k2}
\end{figure}
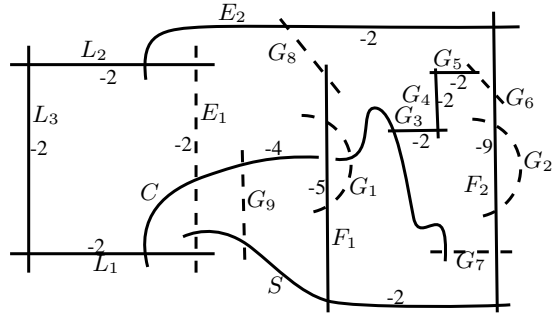

\begin{figure}[htbp]
\scalebox{0.75} % Change this value to rescale the drawing.
{
\begin{pspicture}(0,-3.9101562)(9.17125,3.8901563)
\psline[linewidth=0.04cm](1.2053125,3.8701563)(0.2053125,2.1901562)
\psline[linewidth=0.04cm](1.6053125,3.2701561)(0.6053125,1.5901562)
\psline[linewidth=0.04cm](1.2053125,1.4701562)(0.2053125,-0.20984375)
\psline[linewidth=0.04cm](1.1718014,0.9710939)(0.23882361,2.6892185)
\psline[linewidth=0.04cm](1.2053125,-0.9298437)(0.2053125,-2.6098437)
\psline[linewidth=0.04cm](1.1718014,-1.4289061)(0.23882361,0.2892186)
\psline[linewidth=0.04cm](1.1253124,-3.6898437)(0.23882361,-2.1107814)
\psline[linewidth=0.04cm](1.5718014,-3.028906)(0.6388236,-1.3107814)
\psline[linewidth=0.04cm](0.8053125,-3.5498438)(8.465313,-3.5898438)
\usefont{T1}{ptm}{m}{n}
\rput(2.3009374,3.4551563){\scriptsize $L'$}
\usefont{T1}{ptm}{m}{n}
\rput(1.5909375,2.0751562){\scriptsize $L''$}
\usefont{T1}{ptm}{m}{n}
\rput(0.6309375,3.4151564){\scriptsize $E_1$}
\usefont{T1}{ptm}{m}{n}
\rput(0.3309375,1.9351562){\scriptsize $E_2$}
\usefont{T1}{ptm}{m}{n}
\rput(0.5309375,0.85515624){\scriptsize $E_3$}
\usefont{T1}{ptm}{m}{n}
\rput(0.3309375,-0.5648438){\scriptsize $E_4$}
\usefont{T1}{ptm}{m}{n}
\rput(0.4309375,-3.1248438){\scriptsize $E_6$}
\usefont{T1}{ptm}{m}{n}
\rput(2.4909375,-3.7248437){\scriptsize $E_7$}
\usefont{T1}{ptm}{m}{n}
\rput(0.4109375,-1.7648437){\scriptsize $E_5$}
\usefont{T1}{ptm}{m}{n}
\rput(7.9509373,0.07515625){\scriptsize $F_1$}
\usefont{T1}{ptm}{m}{n}
\rput(3.7709374,-0.18484375){\scriptsize $F_2$}
\psline[linewidth=0.04cm](4.0253124,2.4101562)(4.0653124,-3.0698438)
\psline[linewidth=0.04cm,linestyle=dashed,dash=0.16cm 0.16cm](4.1453123,-2.8498437)(3.3853126,-3.6698437)
\psline[linewidth=0.04cm](3.8253126,2.1101563)(4.8853126,2.1301563)
\psline[linewidth=0.04cm](4.7253127,2.9101562)(4.7453127,1.9901563)
\psline[linewidth=0.04cm](4.5653124,2.7901564)(5.6253123,2.8101563)
\psline[linewidth=0.04cm,linestyle=dashed,dash=0.16cm 0.16cm](5.5253124,3.5501564)(5.5453124,2.6901562)
\rput{-89.475075}(4.3763485,3.4329839){\psarc[linewidth=0.04,linestyle=dashed,dash=0.16cm 0.16cm](3.9204648,-0.49182224){0.6365934}{0.0}{180.0}}
\psline[linewidth=0.04cm](8.185312,2.6701562)(8.245313,-3.8098438)
\psline[linewidth=0.04cm](7.5453124,-1.7298437)(7.5453124,-2.5098438)
\psline[linewidth=0.04cm](7.6453123,-1.8098438)(6.9453125,-1.8098438)
\psline[linewidth=0.04cm](7.0253124,-1.1098437)(7.0453124,-1.8898437)
\psline[linewidth=0.04cm](6.4453125,-1.2098438)(7.1053123,-1.2098438)
\psline[linewidth=0.04cm](6.5253124,-0.58984375)(6.5253124,-1.2898438)
\psline[linewidth=0.04cm,linestyle=dashed,dash=0.16cm 0.16cm](7.4653125,-2.4098437)(8.425312,-2.4098437)
\psbezier[linewidth=0.04](0.9453125,3.7501562)(1.3853126,3.5301561)(1.9453125,3.7901564)(2.1853125,3.3101563)(2.4253125,2.8301563)(0.9359637,-3.4126143)(1.6253124,-3.1498437)(2.3146613,-2.887073)(1.8053125,-0.64984375)(2.5253124,-0.58984375)(3.2453125,-0.52984375)(3.4853125,-0.56984377)(3.8253126,-0.50984377)
\psbezier[linewidth=0.04](4.2253127,-0.52984375)(4.9853125,-0.50984377)(6.3453126,-0.16984375)(6.5853124,-0.46984375)(6.8253126,-0.76984376)(5.813494,-0.6903501)(5.7853127,-1.5898438)(5.757131,-2.4893374)(7.8253126,-2.7498438)(7.7053127,-3.3298438)
\psline[linewidth=0.04cm,linestyle=dashed,dash=0.16cm 0.16cm](7.6253123,-3.2298439)(8.345312,-3.2298439)
\psline[linewidth=0.04cm,linestyle=dashed,dash=0.16cm 0.16cm](8.045313,0.97015625)(9.045313,-0.00984375)
\psline[linewidth=0.04cm](9.005313,0.13015625)(8.125313,-1.0898438)
\psbezier[linewidth=0.04](1.1453125,2.9301562)(1.4853125,3.0101562)(1.7253125,2.6901562)(1.8453125,2.2901564)(1.9653125,1.8901563)(0.60538304,-3.4239764)(1.4053125,-3.3698437)(2.205242,-3.3157113)(3.0642757,-2.5176346)(3.1053126,-2.0698438)(3.1463494,-1.6220529)(3.1253126,-1.0098437)(3.1053126,-0.82984376)
\psbezier[linewidth=0.04](3.1053126,-0.38984376)(3.0653124,-0.02984375)(3.6253126,1.3101562)(3.4053125,2.4101562)(3.1853125,3.5101562)(5.7453127,3.8101563)(6.2453127,3.1901562)(6.7453127,2.5701563)(6.5948935,1.5535659)(6.7253127,0.55015624)(6.8557315,-0.4532534)(7.6453123,-0.28984374)(7.9853125,-0.32984376)
\psline[linewidth=0.04cm](8.345312,-0.30984375)(9.105312,-0.32984376)
\psline[linewidth=0.04cm,linestyle=dashed,dash=0.16cm 0.16cm](3.3253126,1.3101562)(4.2853127,1.3301562)
\psline[linewidth=0.04cm,linestyle=dashed,dash=0.16cm 0.16cm](2.1053126,-0.9898437)(3.2853124,-0.9898437)
\usefont{T1}{ptm}{m}{n}
\rput(2.7209375,-1.1448437){\scriptsize $G_{17}$}
\usefont{T1}{ptm}{m}{n}
\rput(3.4209375,-3.1248438){\scriptsize $G_{16}$}
\usefont{T1}{ptm}{m}{n}
\rput(3.8009374,1.1551563){\scriptsize $G_{15}$}
\usefont{T1}{ptm}{m}{n}
\rput(5.8409376,3.1351562){\scriptsize $G_{14}$}
\usefont{T1}{ptm}{m}{n}
\rput(5.1609373,2.6551561){\scriptsize $G_{13}$}
\usefont{T1}{ptm}{m}{n}
\rput(5.0809374,2.3151562){\scriptsize $G_{12}$}
\usefont{T1}{ptm}{m}{n}
\rput(4.4409375,1.9551562){\scriptsize $G_{11}$}
\usefont{T1}{ptm}{m}{n}
\rput(7.9209375,-3.0648437){\scriptsize $G_{10}$}
\usefont{T1}{ptm}{m}{n}
\rput(7.8509374,-2.5648437){\scriptsize $G_9$}
\usefont{T1}{ptm}{m}{n}
\rput(7.7709374,-2.0848436){\scriptsize $G_8$}
\usefont{T1}{ptm}{m}{n}
\rput(7.3509374,-1.6248437){\scriptsize $G_7$}
\usefont{T1}{ptm}{m}{n}
\rput(7.2709374,-1.3648437){\scriptsize $G_6$}
\usefont{T1}{ptm}{m}{n}
\rput(6.7909374,-1.0648438){\scriptsize $G_5$}
\usefont{T1}{ptm}{m}{n}
\rput(6.7309375,-0.84484375){\scriptsize $G_4$}
\usefont{T1}{ptm}{m}{n}
\rput(4.7309375,-0.20484374){\scriptsize $G_3$}
\usefont{T1}{ptm}{m}{n}
\rput(8.730938,0.61515623){\scriptsize $G_2$}
\usefont{T1}{ptm}{m}{n}
\rput(8.750937,-0.6248438){\scriptsize $G_1$}
\usefont{T1}{ptm}{m}{n}
\rput(7.95125,1.6551563){\tiny -12}
\usefont{T1}{ptm}{m}{n}
\rput(3.7309375,2.8751562){\tiny -7}
\usefont{T1}{ptm}{m}{n}
\rput(5.2120314,-0.58484375){\tiny -4}
\usefont{T1}{ptm}{m}{n}
\rput(0.85125,2.9751563){\tiny -2}
\usefont{T1}{ptm}{m}{n}
\rput(4.2309375,-1.8048438){\tiny -7}
\usefont{T1}{ptm}{m}{n}
\rput(1.01125,1.6351563){\tiny -2}
\usefont{T1}{ptm}{m}{n}
\rput(0.85125,0.59515625){\tiny -2}
\usefont{T1}{ptm}{m}{n}
\rput(0.75125,-0.36484376){\tiny -2}
\usefont{T1}{ptm}{m}{n}
\rput(0.59125,-2.2248437){\tiny -2}
\usefont{T1}{ptm}{m}{n}
\rput(0.81125,-2.8248436){\tiny -2}
\usefont{T1}{ptm}{m}{n}
\rput(5.11125,2.9551563){\tiny -2}
\usefont{T1}{ptm}{m}{n}
\rput(4.55125,2.4751563){\tiny -2}
\usefont{T1}{ptm}{m}{n}
\rput(4.31125,2.2551563){\tiny -2}
\usefont{T1}{ptm}{m}{n}
\rput(4.97125,-3.4048438){\tiny -2}
\usefont{T1}{ptm}{m}{n}
\rput(6.33125,-1.0048437){\tiny -2}
\usefont{T1}{ptm}{m}{n}
\rput(6.75125,-1.3848437){\tiny -2}
\usefont{T1}{ptm}{m}{n}
\rput(6.85125,-1.5848438){\tiny -2}
\usefont{T1}{ptm}{m}{n}
\rput(7.25125,-1.9848437){\tiny -2}
\usefont{T1}{ptm}{m}{n}
\rput(7.39125,-2.1648438){\tiny -2}
\usefont{T1}{ptm}{m}{n}
\rput(8.63125,-0.06484375){\tiny -2}
\usefont{T1}{ptm}{m}{n}
\rput(0.83125,-2.0048437){\tiny -2}
\usefont{T1}{ptm}{m}{n}
\rput(0.91125,2.4751563){\tiny -2}
\end{pspicture}
}
\caption{Largest index for $K^2=3$}
\label{k3}
\end{figure}

%\newpage
%\bigskip
\textbf{Wahl singularities for $K^2=3$:}

\begin{longtable}{|c|c|c|c|}

\hline

$\mathbf{\binom{n}{a}}$ & {\bf Chain} & {\bf TMMR} & {\bf Reference}\\ [0.05cm]

\hline

$\binom{2}{1}$ & $[4]$ & GenType2 & \cite{PPS09} \\ [0.05cm]

$\binom{3}{1}$ & $[5,2]$ & GenType1 & \cite{Urz13} \\ [0.05cm]

$\binom{4}{1}$ & $[6,2,2]$ & GenType1 & \cite{PN} \\ [0.05cm]

$\binom{5}{1}$ & $[7,2,2,2]$ & Rat & \cite{S13} \\ [0.05cm]

$\binom{5}{2}$ & $[3,5,2]$ & GenType1 & \cite{S13} \\ [0.05cm]

$\binom{7}{1}$ & $[9,2,2,2,2,2]$ & Rat & \cite{PPS09} \\ [0.05cm]

$\binom{7}{3}$ & $[2,6,2,3]$ & Rat & \cite{PPS09} \\ [0.05cm]

$\binom{9}{4}$ & $[2,7,2,2,3]$ & Rat & \cite{S13} \\ [0.05cm]

$\binom{11}{4}$ & $[3,6,2,3,2]$ & Rat & \cite{S13} \\ [0.05cm]

$\binom{11}{5}$ & $[2,8,2,2,2,3]$ & Rat & \cite{S13} \\ [0.05cm]

$\binom{14}{3}$ & $[5,5,3,2,2,2]$ & Rat & \cite{PN} \\ [0.05cm]

$\binom{16}{5}$ & $[2,2,8,2,2,2,4]$ & Rat & \cite{Urz13} \\ [0.05cm]

$\binom{17}{7}$ & $[2,4,2,6,2,3]$ & Rat & \cite{S13} \\ [0.05cm]

$\binom{19}{5}$ & $[4,7,2,2,3,2,2]$ & Rat & \cite{PPS09} \\ [0.05cm]

$\binom{19}{6}$ & $[2,2,9,2,2,2,2,4]$ & Rat & \cite{S13} \\ [0.05cm]

$\binom{23}{6}$ & $[4,8,2,2,2,3,2,2]$ & Rat & \cite{PN} \\ [0.05cm]

$\binom{24}{7}$ & $[4,2,6,2,4,2,2]$ & Rat & \cite{S13} \\ [0.05cm]

$\binom{25}{7}$ & $[4,3,2,6,3,2,2]$ & Rat & \cite{S13} \\ [0.05cm]

$\binom{29}{7}$ & $[2,2,2,10,2,2,2,2,2,5]$ & Rat & \cite{S13} \\ [0.05cm]

$\binom{30}{11}$ & $[3,4,5,3,2,3,2]$ & Rat & \cite{Urz13} \\ [0.05cm]

$\binom{31}{7}$ & $[5,2,6,2,4,2,2,2]$ & Rat & \cite{S13} \\ [0.05cm]

$\binom{35}{6}$ & $[6,8,2,2,2,3,2,2,2,2]$ & Rat & \cite{PPS09} \\ [0.05cm]

$\binom{41}{17}$ & $[2,4,2,6,2,4,2,3]$ & Rat & \cite{S13} \\ [0.05cm]

$\binom{48}{17}$ & $[3,6,5,3,2,2,2,3,2]$ & Rat & \cite{PPS09} \\ [0.05cm]

$\binom{53}{14}$ & $[4,5,5,3,2,2,3,2,2]$ & Rat & \cite{PN} \\ [0.05cm]

$\binom{63}{29}$ & $[2,7,7,2,2,3,2,2,2,2,3]$ & Rat & \cite{LN12} \\ [0.05cm]

$\binom{65}{17}$ & $[4,6,5,3,2,2,2,3,2,2]$ & Rat & \cite{PPS09} \\ [0.05cm]

$\binom{69}{31}$ & $[2,6,2,6,2,4,2,2,2,3]$ & Rat & \cite{S13} \\ [0.05cm]

$\binom{71}{15}$ & $[5,4,6,2,3,2,3,2,2,2]$ & Rat & \cite{PN} \\ [0.05cm]

$\binom{83}{38}$ & $[2,7,2,6,2,4,2,2,2,2,3]$ & Rat & \cite{S13} \\ [0.05cm]

$\binom{97}{45}$ & $[2,8,2,6,2,4,2,2,2,2,2,3]$ & Rat & \cite{S13} \\ [0.05cm]

$\binom{100}{29}$ & $[4,2,6,2,6,2,2,2,4,2,2]$ & Rat & \cite{S13} \\ [0.05cm]

$\binom{100}{31}$ & $[4,2,2,2,4,2,6,2,6,2,2]$ & Rat & \cite{S13} \\ [0.05cm]

$\binom{113}{25}$ & $[2,2,2,10,2,6,2,2,2,2,2,2,2,5]$ & Rat & \cite{S13}\\ [0.05cm]

$\binom{113}{42}$ & $[2,3,2,6,2,6,2,2,2,4,3]$ & Rat & \cite{S13} \\ [0.05cm]

$\binom{123}{19}$ & $[7,2,12,2,2,2,2,2,2,2,4,2,2,2,2,2]$ & Rat & \cite{S13} \\ [0.05cm]

\hline

\end{longtable}

In this case, the largest index is $n=123$. The chain corresponding to the exceptional divisor of the minimal resolution of this singularity was obtained by blowing up seventeen times the elliptic fibration $I_3^*+3I_1$ with two double sections. This configuration has another chain corresponding to the Wahl singularity with $n=5$ and $a=1$, as shown in Figure \ref{k3}.

\bigskip
\textbf{Wahl singularities for $K^2=4$:}

\begin{longtable}{|c|c|c|c|}

\hline

$\mathbf{\binom{n}{a}}$ & {\bf Chain} & {\bf TMMR} & {\bf Reference} \\ [0.05cm]

\hline

$\binom{2}{1}$ & $[4]$ & GenType3 & \cite{PN} \\  [0.05cm]

$\binom{6}{1}$ & $[8,2,2,2,2]$ & Dol(2,3) & \cite{PN} \\  [0.05cm]

$\binom{7}{2}$ & $[4,5,2,2]$ & Dol(2,3) & \cite{HTU13} \\ [0.05cm]

$\binom{9}{4}$ & $[2,7,2,2,3]$ & GenType1 & \cite{PN} \\ [0.05cm]

$\binom{24}{5}$ & $[5,7,2,2,3,2,2,2]$ & Rat & \cite{PN} \\ [0.05cm]

$\binom{183}{38}$ & $[5,6,2,6,2,4,2,2,2,3,2,2,2]$ & Rat & \cite{PPS09-2} \\ [0.05cm]

$\binom{252}{107}$ & $[2,4,6,2,6,2,4,2,2,2,3,2,3]$ & Rat & \cite{PPS09-2} \\ [0.05cm]
\hline
\end{longtable}

We end with some remarks.

%\vspace{0.4cm}

\textbf{(1)} We do not know if $\Q Eq$ appears on smoothable KSBA surfaces with $p_g=0$, and $K^2=3,4$.

%\vspace{0.4cm}

\textbf{(2)} There exist a general bound for the indices of normal surface singularities that show up in the KSBA compactification. In particular, Y. Lee gives in \cite{L99} an explicit bound for the indices $n$ when the TMMR is of general type: $n \leq 2^{400 (K^2)^4}$. The largest indices we found above are not even close to this bound (and the majority has TMMR rational). We do not know an optimal bound.

%\vspace{0.4cm}

\textbf{(3)} For $K^2=1,2,3$ we have many examples of Wahl singularities, for $K^2=4$ have very few. Essentially none of them is new in this case, they come from degenerations of the examples in \cite{PPS09-2}. The moduli space for $K^2=4$ is expected to be a surface, and so it may be simpler to study. With this in hand, it would be interesting to construct new $K^2=4$ examples, they would define KSBA boundary curves which may connect the known examples.

%\newpage
%----------------------------------------------------------------------------------------------------------------------------------------------

%\vspace{0.3cm}

{\tiny Facultad de Matem\'aticas, Pontificia Universidad Cat\'olica de Chile, Santiago, Chile.}

\end{document}